\patchcmd{\algorithmic}{\item[\ALG@line\hskip\algorithmicindent]}{\item[]}{}{}
\newcommand{\Idx}[1]{\mathfrak{I}_{#1}}          
\newcommand{\Rplus}[1]{\mathbb{R}_{\geq}^{#1}}  
\newcommand{\xlm}{z^{\dagger}}                 
\newcommand{\obj}[1]{\mathcal{\theta}_{#1}}              
\newcommand{\fvec}{\boldsymbol{\bm{\theta}}} 
\newcommand{\Feas}{\Omega}                      
\newcommand{\mult}{\mathsf{\eta}}                 
\newcommand{\Kconst}{\mathcal{K}}                    
\newcommand{\Lconst}{\ell}                      
\newcommand{\muc}{\varsigma}                       
\newcommand{\rhoc}{\vartheta} 
\newcommand{\gdf}{\mathcal{G}}
\theoremstyle{thmstyleone}%
\newtheorem{theorem}{Theorem}[section]
\theoremstyle{thmstyletwo}%
\newtheorem{remark}{Remark}[section]%
\newtheorem{assumption}{Assumption}
\newtheorem{lemma}{Lemma}[section]
\theoremstyle{thmstylethree}%
\newtheorem{definition}{Definition}%
\begin{document}

\title[Global Descent Method for Non-convex MOP]{Global Descent Method for Non-convex Multi-objective Optimization Problems}








\author[1]{\fnm{Bikram} \sur{Adhikary}}
\equalcont{These authors contributed equally to this work.}
\author*[1]{\fnm{Md Abu Talhamainuddin} \sur{Ansary}}\email{md.abutalha2009@gmail.com}
\equalcont{These authors contributed equally to this work.}
\author[2]{ \fnm{Savin} \sur{Trean{\c{t}}{\u{a}}}}
\affil[1]{\orgdiv{Department of Mathematics}, \orgname{Indian Institute of Technology Jodhpur} \city{Jodhpur}, \postcode{342030}, \state{Rajasthan}, \country{India}}
\affil[2]{Department Applied Mathematics, National University of Science and Technology Politehnica Bucharest, 060042 Bucharest, Romania}
\equalcont{These authors contributed equally to this work.}



\abstract{In this paper, we develop a global descent method for non-convex multi-objective optimization problems. The proposed approach builds upon foundational concepts from single-objective global descent techniques while removing the need for predefined scalars or ordering information of objective functions. Initially, the proposed method identifies a local weak efficient solution using any suitable descent algorithm, then applies an auxiliary function termed the multi-objective global descent function to systematically transition toward improved local weak efficient solutions. It is justified that this method can generate a global Pareto front for non-convex problems, which has many different local Pareto fronts. Finally, comprehensive numerical experiments on benchmark non-convex multi-objective optimization problems have been done to demonstrate the method’s robustness, scalability and effectiveness of the proposed method.}
\keywords{Global optimization, multi-objective optimization, global descent method, non-convex optimization, Pareto front}


\pacs[2020 MSC Classification]{90C29, 90C26, 65K10, 49M99}

\maketitle

\section{Introduction}\label{sec_intro}
Multi-objective optimization addresses problems involving multiple, often conflicting objectives that must be optimized simultaneously. A major challenge in such settings is the identification of Pareto-optimal solutions, where no objective can be improved without deteriorating another. It has widespread applications in various scientific and engineering domains, including environmental analysis, space exploration, portfolio management, management science, and medical science etc.. These real-world problems often involve complex trade-offs that cannot be adequately addressed by single-objective optimization, thereby increasing the need for sophisticated methods capable of effectively balancing multiple competing objectives in a principled and efficient manner.
\par Traditional approaches to solving multi-objective optimization problems typically fall into two categories: scalarization methods and heuristic approaches. Scalarization techniques \cite{ Deb2001-tt, Ehrgott2005, Miettinen2012-rv} reduce a multi-objective problem to a single-objective one using weighted combinations or constraint reformulations. However, they are sensitive to parameter selection and often fail to capture the full Pareto front, particularly in non-convex settings. On the other hand, heuristic approaches, such as evolutionary algorithms \cite{laumanns2002combining, mostaghim2007multi}, offer stochastic search mechanisms but lack of theoretical guarantee of convergence to the solution, making them computationally expensive and often inefficient for large-scale applications.
\par Recent advances in descent-based optimization have extended classical single-objective descent methods to multi-objective problems. These include methods for smooth unconstrained problems \cite{ ansary2015modified,  fliege2009newton, fliege2000steepest, ram1, ram2, qu2013trust}, constrained scenarios \cite{ ansary2019sequential, ansary2020sequential, ansary2021sqcqp, fliege2016sqp}, non-smooth optimization \cite{ansary2023proximal, bento2014proximal, tanabe2019proximal}, and uncertain multi-objective optimization problems with finite uncertainty \cite{kumar1,kumar2,kumar3}. While promising, these techniques frequently converge to local Pareto-optimal solutions and remain limited in their ability to handle non-convex problems, which is a central challenge in many real-world applications. This motivates the development of more globally oriented descent strategies, such as the method proposed in this work, to effectively explore and approximate the global Pareto front in non-convex problems.
\par Non-convex multi-objective optimization problems introduce additional \linebreak
challenges due to the complex non-convex nature of the objective functions and intricate trade-offs among competing objectives, where multiple local Pareto-optimal solutions complicate the search for global Pareto-optimal solutions. Traditional gradient-based techniques suffer from premature convergence to local solutions, while metaheuristic methods such as genetic algorithms and particle swarm optimization require excessive computational resources and lack strong theoretical foundations. These limitations highlight the need for a robust optimization framework that can efficiently explore non-convex Pareto-optimal landscapes while still being supported by solid theoretical foundations.
\par In the context of the single-objective optimization problem, the filled function method introduced by Renpu \cite{renpu1990filled} has emerged as a prominent strategy among \linebreak
function-modification techniques for escaping local minima. Despite its theoretical appeal, early implementations of the method encountered significant numerical challenges. To address these issues, refined versions of filled functions were developed, notably by Zhang et al. \cite{zhang2004new}, which improved numerical stability and convergence behavior. Extending this line of work, Ng et al. \cite{duanli2010global} proposed a generalized framework involving auxiliary functions, known as global descent functions. These functions inherit the favorable analytical properties of filled functions while ensuring the existence of a descent direction in the neighborhood of a superior solution, thereby enhancing their applicability in complex non-convex optimization scenarios.
\par Building upon the foundational theory of global descent functions proposed by  Ng et al. \cite{duanli2010global}, this work extends the auxiliary-function-based strategy to the multi-objective optimization setting. Multi-objective problems introduce unique challenges, including the lack of a total ordering of objective vectors and the presence of locally Pareto-optimal solutions that can trap descent-based methods. Unlike scalar problems, where a single objective provides a clear descent direction, multi-objective landscapes often feature conflicting gradients and disconnected efficient regions. Our proposed method addresses these difficulties by constructing a multi-objective global descent function that ensures descent across multiple objectives and enables escape from locally nondominated regions toward globally Pareto-optimal solutions. This framework facilitates deeper and more reliable exploration of non-convex Pareto fronts, while maintaining descent consistency grounded in solid theoretical principles.
\par The principal contributions of this work are as follows:
\begin{itemize}
\item introduction of a {multi-objective auxiliary function} (Definition~\ref{def_mogdf}) that enhances solution exploration in complex non-convex optimization problems;
    \item formulation of a global descent-based optimization algorithm that overcomes the shortcomings of conventional scalarization and heuristic approaches;
    \item development of a multi-objective global descent method to facilitate effective transitions from local to global Pareto-optimal solutions;   
    \item theoretical guarantee of descent toward globally Pareto-optimal solutions, including the existence of descent directions near superior points (Theorems \ref{theorem_c1}, \ref{theorem_3.5}, and \ref{theorem_3.6});   
    \item empirical validation on diverse benchmark problems, demonstrating the proposed method’s ability to escape local Pareto traps and improve solution quality across conflicting objectives.
\end{itemize}
The novelty of this study lies in the structured integration of auxiliary functions into a rigorous multi-objective optimization framework. Unlike conventional scalarization-based approaches, which require predefined weight factors, or metaheuristic techniques, which lack convergence guarantees, our approach dynamically refines solutions to achieve global efficiency. The proposed method is particularly advantageous for non-convex problems, ensuring the discovery of high-quality Pareto fronts that traditional techniques fail to achieve. By introducing a global descent mechanism, this work provides a significant advancement in multi-objective optimization methodologies, enhancing the ability to tackle complex, real-world optimization challenges across multiple domains. The proposed framework not only ensures theoretical rigor but also offers a scalable and computationally efficient approach to solving high-dimensional multi-objective problems.
\par The remainder of this paper is structured as follows. Section \ref{sec_pre} presents the necessary preliminaries and theoretical foundations related to the proposed approach. In Section \ref{MOGDM}, the multi-objective global descent method is introduced, incorporating a novel multi-objective auxiliary function to improve global optimization performance. An algorithm is proposed in this section. Section \ref{sec_exe} presents extensive numerical experiments conducted to validate the effectiveness of the proposed method. Finally, concluding remarks and potential directions for future research are discussed in the final section.
\section{Preliminaries}\label{sec_pre}
Throughout the paper, for any $p\in \mathbb{N}$, we use the following notations:
\begin{eqnarray*}
    \Idx{p} &:=& \{1, 2, \dots, p\};\\
    \Rplus{p}&:=& \{z \in \mathbb{R}^p \mid z_i \geq 0, \forall i \in \Idx{p}\}.\\
\end{eqnarray*}
For $x,y \in \mathbb{R}^p$, vector inequalities are interpreted component-wise. Let us consider an unconstrained multi-objective optimization problem:
\begin{equation*}\label{eq2.1}
(MOP): \underset{z \in \Feas \subseteq \mathbb{R}^n}{\min} \fvec(z) = \left( \obj{1}(z), \obj{2}(z), \dots, \obj{m}(z) \right)
\end{equation*}
where \( m \geq 2 \) and \( \obj{j}: \mathbb{R}^{n} \to \mathbb{R},~ j \in \Idx{m} \) are continuously differentiable functions.
The gradient of \( \obj{j} \) at a point \( z \) is denoted by \( \nabla \obj{j}(z) \).
\par If there exists a point \( \xlm \in \Feas \) at which all objective functions attain their minimum values simultaneously, then \( \xlm \) is referred to as an ideal solution. However, in practice, improving the value of one objective function often results in the deterioration of another. Consequently, the notion of efficiency, rather than optimality, becomes the central focus in multi-objective optimization. A feasible point \( \xlm \) is said to be a global efficient solution of $(\mathrm{MOP})$ if there exists no other \( z \in \Feas \) such that \( \fvec(z) \leq \fvec(\xlm) \) and \( \fvec(z) \neq \fvec(\xlm) \) hold where $\fvec(z) = \left( \obj{1}(z), \obj{2}(z), \dots, \obj{m}(z) \right).$ Moreover, a feasible point \( \xlm \) is termed a global weak efficient solution of $(\mathrm{MOP})$ if there exists no \( z \in \mathbb{R}^n \) such that \( \fvec(z) < \fvec(\xlm) \) holds. A feasible point \( \xlm \) is said to be a local efficient solution of $(\mathrm{MOP})$ if there exists a neighborhood of  \( \xlm \) ($N(\xlm,r)$, $r>0$) such that there does not exist any $z\in N(\xlm,r)$ such that  \( \fvec(z) \leq \fvec(\xlm) \) and \( \fvec(z) \neq \fvec(\xlm) \) hold. Similarly, \( \xlm \) is a local weak efficient solution of  $(\mathrm{MOP})$ if does not exist any $z\in N(\xlm,r)$ such that  \( \fvec(z) <\fvec(\xlm) \) holds. If $\fvec_j$ is convex for every $j$, then a local (weak) efficient solution will be a global (weak) efficient solution of  $(\mathrm{MOP})$. If \( X^\dagger \) denotes the set of all (global/local) efficient solutions to $(\mathrm{MOP})$, then the corresponding image \( \fvec(X^\dagger) \) is known as the (global/local) Pareto front of the problem. Convex multi-objective optimization problems have one global Pareto front, but non-convex multi-objective optimization problems may have different local Pareto fronts.
\par The theorem stated below provides a necessary condition for a point \( \xlm \) to qualify as a local weak efficient solution of $(\mathrm{MOP})$, assuming that the objective functions \( \obj{k}: \mathbb{R}^n \rightarrow \mathbb{R} \) are continuously differentiable for all \( k \in \Idx{m} \).
\begin{theorem}[{\bf First order necessary condition for weak efficiency}]\label{theorem1.1}
If any feasible point $\xlm$ is a weak efficient solution of $(\mathrm{MOP})$, then there exists $\mult \in \Rplus{m}$, $\mult \neq \mathbf{0}^{m}$, satisfying
\begin{flalign}
 \sum_{k \in \Idx{m}} \mult_k \nabla \obj{k}(\xlm) = 0. \label{eq1.2}
\end{flalign}
\end{theorem}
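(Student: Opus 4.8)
The plan is to argue by contradiction using a theorem of the alternative (Gordan's theorem), together with a first-order Taylor expansion. Suppose, contrary to the claim, that no $\mult \in \Rplus{m}$ with $\mult \neq \mathbf{0}^{m}$ satisfies \eqref{eq1.2}. After the harmless normalization $\sum_{k \in \Idx{m}} \mult_k = 1$, this is equivalent to saying that the origin of $\mathbb{R}^{n}$ does not lie in the convex hull of the finite set $\{\nabla \obj{k}(\xlm) : k \in \Idx{m}\}$, which is a compact convex subset of $\mathbb{R}^{n}$.

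From here I would apply Gordan's theorem to the $m \times n$ matrix whose $k$-th row is $\nabla \obj{k}(\xlm)^{\top}$: exactly one of the following is solvable — either there is $\mult \in \Rplus{m}$, $\mult \neq \mathbf{0}^{m}$, with $\sum_{k \in \Idx{m}} \mult_k \nabla \obj{k}(\xlm) = 0$, or there is a direction $d \in \mathbb{R}^{n}$ with $\nabla \obj{k}(\xlm)^{\top} d < 0$ for every $k \in \Idx{m}$. (Equivalently, such a $d$ is produced directly by strictly separating the origin from the compact convex hull above.) Since we assumed the first system has no solution, such a common descent direction $d$ exists. Using continuous differentiability of each $\obj{k}$, for $t>0$ we may write
\[
\obj{k}(\xlm + t d) = \obj{k}(\xlm) + t\, \nabla \obj{k}(\xlm)^{\top} d + o(t), \qquad k \in \Idx{m}.
\]
Because $\nabla \obj{k}(\xlm)^{\top} d < 0$ for all $k$ and there are only finitely many objectives, one can pick a single threshold $\bar{t} > 0$ so that $\obj{k}(\xlm + t d) < \obj{k}(\xlm)$ for all $k \in \Idx{m}$ and all $t \in (0, \bar{t}\,]$; hence $\fvec(\xlm + t d) < \fvec(\xlm)$ for every sufficiently small $t > 0$. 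Thus every neighborhood $N(\xlm, r)$ of $\xlm$ contains a feasible point at which the entire objective vector is strictly smaller, contradicting that $\xlm$ is a (local) weak efficient solution of $(\mathrm{MOP})$. This contradiction establishes the existence of the desired multipliers $\mult$.

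The only genuinely delicate step is the passage from infeasibility of \eqref{eq1.2} to the existence of the common descent direction $d$: this is exactly where a theorem of the alternative (Gordan) — or, equivalently, strict separation of a point from a compact convex set — is required, and one must also be careful to use the finiteness of $\Idx{m}$ so that a single step size $\bar t$ works uniformly for all objectives simultaneously. The remaining parts are routine.
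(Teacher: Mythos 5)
The paper states this first-order necessary condition without any proof, treating it as a standard result from the multiobjective optimization literature, so there is no in-paper argument to compare against. Your proof via Gordan's theorem of the alternative (equivalently, strict separation of the origin from the convex hull of the gradients) followed by a first-order Taylor expansion and a uniform step size over the finitely many objectives is exactly the standard derivation, and it is correct. The one step you use silently is that $\xlm + t d$ remains feasible for all small $t>0$; this is harmless here because the paper treats $(\mathrm{MOP})$ as unconstrained and its definition of (local) weak efficiency quantifies over a full neighborhood $N(\xlm,r)$, but it would need attention if $\xlm$ were allowed to sit on the boundary of a genuine constraint set.
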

A feasible point \(\xlm\) is said to be a critical point of $(\mathrm{MOP})$ if  (\ref{eq1.2})  holds at $\xlm$ with some $\mult\in \Rplus{m}\setminus\{0^m\}$. A critical point is a weak efficient solution if $\fvec_j$ is a convex function for all $j$. In case of strictly convex functions $\fvec_j$, a critical point is an efficient solution.

\section{Multi-objective global descent method}\label{MOGDM}
This section begins by outlining the core assumptions that underpin the proposed framework. A general definition of the multi-objective global descent function is then introduced. Building upon this, we construct a two-parameter family of global descent functions for multi-objective optimization problems $(MOPs)$. It is subsequently demonstrated that this class of functions satisfies the necessary conditions for global descent. The stated assumptions remain in effect throughout the remainder of the paper.
\begin{assumption}\label{assumption1}
The level set \( \Feas = \left\{ z \in \mathbb{R}^n : \fvec(z) \leq \fvec(z^0) \right\} \), where \( z^0 \) denotes the initial approximation, is assumed to be a compact and connected subset of \( \mathbb{R}^n \) with nonempty interior.
\end{assumption}
\begin{assumption}\label{assumption2}
The objective functions \( \obj{j}:\Feas \to \mathbb{R}^m \) are continuously differentiable and satisfy the Lipschitz continuity condition. Specifically, for any \( z^{(1)}, z^{(2)} \in \Feas \)
\begin{equation} \label{eq1.3}
    |\obj{j}(z^{(1)}) - \obj{j}(z^{(2)})| \leq \Lconst_j \|z^{(1)} - z^{(2)}\|, ~ j \in \Idx{m},
\end{equation}
where \( 0<\Lconst_j < \infty \) are Lipschitz constants and \( \| \cdot \| \) represents the Euclidean norm. For $\Lconst= \max\left\{\Lconst_1,\Lconst_2,\dots, \Lconst_m\right\}$,
\begin{equation} \label{eq1.4}
    |\obj{j}(z^{(1)}) - \obj{j}(z^{(2)})| \leq \Lconst \|z^{(1)} - z^{(2)}\|, ~ j \in \Idx{m}. 
\end{equation}
\end{assumption}
\begin{assumption}\label{assumption3}
The set of all values corresponding to the local weak efficient solutions 
\begin{equation} \label{eq1.5}
\fvec^\dagger_{\text{all}} = \{\fvec(\xlm): \xlm \text{ is a local weak efficient solution of } (MOP) \text{ over } \Feas \}.
\end{equation}
is finite.
\end{assumption}
\begin{assumption}\label{assumption4}
The set of global efficient solutions of $(\mathrm{MOP})$ lies in the interior of $\Feas$.
\end{assumption}
\paragraph{Implications of the assumptions}
\begin{itemize}
    \item From Assumption \ref{assumption1}, there exists a constant \( \Kconst > 0 \) such that the maximum distance between any two points in $\Feas$ is bounded,
        \begin{equation}\label{eq1.7}
        0 < \max_{z^{(1)}, z^{(2)} \in \Feas} \|z^{(1)} - z^{(2)}\| \leq \Kconst.
    \end{equation}
    
    \item Assumption \ref{assumption2} implies the existence of a constant \( \mathcal{M}> 0 \) such that
    \begin{equation}\label{eq1.8}
        0 < \max_{x \in \Feas} \|\nabla \obj{j}(z)\| \leq \mathcal{M}.        
    \end{equation}
    
    \item Assumption \ref{assumption3} ensures that while the number of local weak efficient solutions in $\Feas$ could be infinite, the corresponding set of function values is finite.
\end{itemize}
We now proceed to define the class of multi-objective global descent functions. For this purpose, let \( \xlm \in \operatorname{int}(\Feas) \) be a known local weak efficient solution of $(\mathrm{MOP})$. We introduce the following subset of the feasible region:
\begin{equation}\label{eq_mop_basin}
  \hat{\Feas}(\xlm) = \left\{ z \in \mathrm{int}(\Feas) : z \neq \xlm,\ \obj{j}(z) \geq \obj{j}(\xlm) \text{ for at least one } j \in \Idx{m} \right\}.  
\end{equation}
\begin{definition}[Multi-objective global descent function]\label{def_mogdf}
    A function \( \gdf_{\xlm} : \Feas \to \mathbb{R}^m \) is said to be a multi-objective global descent function of $\fvec$ at \( \xlm \) if it satisfies the following conditions:
\begin{enumerate}[label=(C\arabic*)]
    \item \( \xlm \) is a local weak efficient solution of \( -\gdf_{\xlm} \) over \( \Feas \);
    \item \( \gdf_{\xlm} \) has no Fritz John points in the set \(  \hat{\Feas}(\xlm) \);
    \item if \( z^{\dagger \dagger} \in \mathrm{int}(\Feas) \) is a local weak efficient solution of \( (MOP) \) with \( \obj{j}(z^{\dagger \dagger}) < \obj{j}(\xlm) \) for all \( j \in \Idx{m} \), then \( \gdf_{\xlm} \) has a local weak efficient solution \( z' \) such that \( z' \in N(z^{\dagger \dagger},\epsilon) \subset \Feas \) and \( \obj{j}(z) < \obj{j}(\xlm) \) for all \( x \in N(z^{\dagger \dagger},\epsilon) \), where \( N(z^{\dagger \dagger},\epsilon) \) denotes an \( \epsilon \)-neighborhood of \( z^{\dagger \dagger} \).
\end{enumerate}
\end{definition}
We propose a family of two-parameter multi-objective global descent functions for \( (MOP) \) at a given local weak efficient solution \( \xlm \) over \( \Feas \). Define
\begin{equation}\label{eq8}
  \gdf_{j,\muc,\rhoc,\xlm}(z) = A_\muc(\obj{j}(z) - \obj{j}(\xlm)) - \rhoc \| z - \xlm \|,  
\end{equation}
where \( \rhoc > 0 \), \( 0 < \muc < 1 \), and
\begin{equation*}
A_\muc(y) = y \cdot V_\muc(y),
\end{equation*}
with \( V_\muc : \mathbb{R} \to \mathbb{R} \) continuously differentiable and satisfying:
\begin{enumerate}[label=(V\arabic*)]
    \item \( V_\muc(-\tau) = 1 \), \( V_\muc(0) = \muc \), and \( V_\muc(y) \geq c \muc \) for all \( y \);
    \item \( V_\muc'(y) < 0 \) for all \( y < 0 \), and \( -c' \muc \leq V_\muc'(y) \leq 0 \) for all \( y \geq 0 \),
\end{enumerate}
where \( V_\muc' \) is the derivative of \( V_\muc \), \( \tau > 0 \) is small, \( 0 < c \leq 1 \), and \( c' \geq 0 \) satisfies \( \lim_{\muc \to 0} \muc c' = 0 \).

The parameter \(\tau\) must satisfy:
\begin{equation}\label{eq2.2}
    0 < \tau < \min \left\{ |\fvec^{\dagger} - \fvec^{\dagger \dagger}| : \fvec^{\dagger}, \fvec^{\dagger \dagger} \in {\fvec^{\dagger}}_{\text{all}}, \fvec^{\dagger} \neq \fvec^{\dagger \dagger} \right\},
\end{equation}
where \( {\fvec^{\dagger}}_{\text{all}} \) is defined in Assumption~\ref{assumption3}. For all such \( \fvec^\dagger, \fvec^{\dagger \dagger} \), the inequality \linebreak
\( \fvec^{\dagger \dagger} < \fvec^\dagger - \tau \) holds. In practice, the descent algorithm described in subsection~\ref{sec_alg} is insensitive to the value of \(\tau\), and hence it is set to \( \tau = 1 \) in computations.

Our goal is to transition from one local weak efficient solution of \( (MOP) \) to a better one at each iteration using this auxiliary function, termed as {multi-objective global descent function}.
\par The following lemma, as stated in~\cite{duanli2010global}, is included here for completeness and will be instrumental in the subsequent analysis.
\begin{lemma}\label{lemma3.1}~
\begin{enumerate}
    \item \( V_\muc(y) > 1 \) for all \( y < -\tau \); \label{property1_lemma3.1}
    \item \( V_\muc(y) \leq \muc \) for all \( y \geq 0 \); in particular, if \( c' = 0 \), then \( V_\muc(y) = \muc \) for all \( y \geq 0 \);\label{property2_lemma3.1}
    \item If \( y^{(1)} < y^{(2)} \leq -\tau < 0 \), then \( 0 < y^{(2)} - y^{(1)} < A_\muc(y^{(2)}) - A_\muc(y^{(1)}) \); \label{property3_lemma3.1}
    \item \( A_\muc'(0) = \muc \), and \( A_\muc'(y) \leq \muc \) for all \( y > 0 \); if \( c' = 0 \), then \( A_\muc'(y) = \muc \) for all \( y > 0 \). \label{property4_lemma3.1}
\end{enumerate}
\end{lemma}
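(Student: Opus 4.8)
The plan is to verify the four items one at a time, using nothing beyond the defining conditions (V1)--(V2) of $V_\muc$ and elementary calculus: recovering monotonicity from the sign of a derivative, one application of the mean value theorem, and the product rule for $A_\muc(y)=yV_\muc(y)$, which is $C^1$ with $A_\muc'(y)=V_\muc(y)+yV_\muc'(y)$.

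For items (1) and (2) I would simply exploit monotonicity. By (V2), $V_\muc'<0$ on $(-\infty,0)$, so $V_\muc$ is strictly decreasing there; together with the normalization $V_\muc(-\tau)=1$ from (V1) this gives $V_\muc(y)>V_\muc(-\tau)=1$ whenever $y<-\tau$. Dually, (V2) makes $V_\muc$ nonincreasing on $[0,\infty)$, so $V_\muc(0)=\muc$ forces $V_\muc(y)\le\muc$ for $y\ge 0$; and if $c'=0$ the sandwich $-c'\muc\le V_\muc'\le 0$ collapses to $V_\muc'\equiv 0$ on $[0,\infty)$, whence $V_\muc\equiv\muc$ there.

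For item (3) I would first note that $y^{(2)}-y^{(1)}>0$ is immediate, then show $A_\muc'(y)>1$ for every $y<-\tau$: there $V_\muc(y)>1$ by item (1) and $yV_\muc'(y)>0$ since $y<0$ and $V_\muc'(y)<0$, so $A_\muc'(y)=V_\muc(y)+yV_\muc'(y)>1$. A single use of the mean value theorem on $[y^{(1)},y^{(2)}]$ then produces $\xi\in(y^{(1)},y^{(2)})$ with $A_\muc(y^{(2)})-A_\muc(y^{(1)})=A_\muc'(\xi)(y^{(2)}-y^{(1)})$; since $\xi<y^{(2)}\le-\tau$ the bound $A_\muc'(\xi)>1$ applies and closes the inequality. (An equivalent route is the telescoping identity $A_\muc(y^{(2)})-A_\muc(y^{(1)})=(y^{(2)}-y^{(1)})V_\muc(y^{(2)})+y^{(1)}\bigl(V_\muc(y^{(2)})-V_\muc(y^{(1)})\bigr)$, with $V_\muc(y^{(2)})\ge 1$ by (V1) and item (1), and the last summand strictly positive because $y^{(1)}<0$ and $V_\muc$ is strictly decreasing.) For item (4), differentiating $A_\muc$ gives $A_\muc'(0)=V_\muc(0)=\muc$ at once; for $y>0$, item (2) gives $V_\muc(y)\le\muc$ and (V2) gives $yV_\muc'(y)\le 0$, so $A_\muc'(y)\le\muc$, with equality throughout when $c'=0$.

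I do not expect a genuine obstacle — the lemma is quoted from \cite{duanli2010global} and the arguments are short. The one spot needing attention is the strict inequality in item (3): the mean value point must be pinned strictly below $-\tau$ (via $\xi<y^{(2)}\le-\tau$) so that the strict bound $V_\muc>1$, rather than merely $V_\muc\ge 1$, is in force; everything else is careful bookkeeping, including the $c'=0$ degenerate cases.
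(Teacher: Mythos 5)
Your proof is correct. The paper itself gives no proof of this lemma---it is quoted from the cited reference of Ng, Li and Zhang ``for completeness''---so there is no in-paper argument to compare against; your verification of all four items directly from (V1)--(V2) and the product rule for $A_\muc$, including the mean-value-theorem step in item (3) that pins the intermediate point $\xi$ strictly below $-\tau$ so the strict bound $V_\muc(\xi)>1$ and the strict positivity of $\xi V_\muc'(\xi)$ both apply, is complete and sound.
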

\begin{remark}
In the multi-objective setting, we introduce a novel extension of the set $\hat{X}$ originally defined in Section~2 of~\cite{duanli2010global}, and denote it by \( \hat{\Feas} \) (see~\eqref{eq_mop_basin}). This newly defined set plays a central role in characterizing the behavior of the proposed multi-objective global descent function \( \gdf_{j,\muc,\rhoc,\xlm} \), as formulated in Equation~(\ref{eq8}). However, establishing that \( \gdf_{j,\muc,\rhoc,\xlm} \) satisfies conditions (C1), (C2) and (C3) is nontrivial and introduces substantial theoretical challenges. Notably, the validity of these conditions hinges on the careful selection of the parameters \( \muc \) and \( \rhoc \). In the following theoretical developments (Theorems \ref{theorem_c1}, \ref{theorem_3.5} and \ref{theorem_3.6}), we formally investigate the conditions under which suitable choices of the parameters \( \muc \) and \( \rhoc \) ensure that \( \gdf_{j,\muc,\rhoc,\xlm} \) qualifies as a valid multi-objective global descent function.
\end{remark}
\begin{theorem}\label{Lemma3.2}
Let \( \bar{z} \in  \hat{\Feas}(\xlm) \). If \( \rhoc > 0 \) and \( 0 < \muc < \min\left\{1, \frac{\rhoc}{\Lconst} \right\} \), then
$$- \gdf_{j, \muc, \rhoc, \xlm}(\bar{z}) > - \gdf_{j, \muc, \rhoc, \xlm}(\xlm)$$ holds for at least one $j$.
\end{theorem}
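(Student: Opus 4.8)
The plan is to reduce the claim to a single chain of inequalities. First I would evaluate the auxiliary function at the reference point: since $A_\muc(0) = 0 \cdot V_\muc(0) = 0$ and $\|\xlm - \xlm\| = 0$, we get $\gdf_{j,\muc,\rhoc,\xlm}(\xlm) = 0$ for every $j$, hence $-\gdf_{j,\muc,\rhoc,\xlm}(\xlm) = 0$. So the theorem is equivalent to exhibiting one index $j$ for which $\gdf_{j,\muc,\rhoc,\xlm}(\bar z) < 0$.

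Next, I would use the structure of $\hat{\Feas}(\xlm)$ from \eqref{eq_mop_basin}: membership of $\bar z$ guarantees both $\bar z \neq \xlm$ and the existence of some $j \in \Idx{m}$ with $\obj{j}(\bar z) \geq \obj{j}(\xlm)$. Fix such a $j$ and write $y := \obj{j}(\bar z) - \obj{j}(\xlm) \geq 0$. The key upper bound on $A_\muc(y)$ comes from Lemma~\ref{lemma3.1}: since $V_\muc(y) \leq \muc$ for $y \geq 0$ and $y \geq 0$, we obtain $A_\muc(y) = y\, V_\muc(y) \leq \muc y$. Then the Lipschitz estimate \eqref{eq1.4} gives $y \leq |\obj{j}(\bar z) - \obj{j}(\xlm)| \leq \Lconst \|\bar z - \xlm\|$, so that $A_\muc(y) \leq \muc \Lconst \|\bar z - \xlm\|$.

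Finally I would close the argument using the hypothesis on $\muc$: from $\muc < \rhoc / \Lconst$ we get $\muc \Lconst < \rhoc$, and since $\bar z \neq \xlm$ forces $\|\bar z - \xlm\| > 0$, the previous bound sharpens to $A_\muc(y) \leq \muc \Lconst \|\bar z - \xlm\| < \rhoc \|\bar z - \xlm\|$. Therefore $\gdf_{j,\muc,\rhoc,\xlm}(\bar z) = A_\muc(y) - \rhoc \|\bar z - \xlm\| < 0 = \gdf_{j,\muc,\rhoc,\xlm}(\xlm)$, and negating yields the stated strict inequality for that $j$.

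I do not expect a serious obstacle, since the proof is a short estimate; the points that need attention are (i) ensuring strictness, which is precisely where $\bar z \neq \xlm$ (built into the definition of $\hat{\Feas}(\xlm)$) and the strict inequality $\muc \Lconst < \rhoc$ are used, and (ii) the degenerate subcase $y = 0$, in which $A_\muc(y) = 0$ and the conclusion still holds because $-\rhoc\|\bar z - \xlm\| < 0$. It is also worth noting that the constraint $\muc < 1$ plays no role in this particular argument; it is imposed only to keep $\muc$ in the admissible parameter range for $V_\muc$ and $A_\muc$.
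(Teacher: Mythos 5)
Your proof is correct and follows essentially the same route as the paper: evaluate $\gdf_{j,\muc,\rhoc,\xlm}(\xlm)=0$, bound $A_\muc$ via $V_\muc(y)\leq\muc$ for $y\geq 0$ together with the Lipschitz estimate, and conclude strict negativity from $\muc\Lconst<\rhoc$ and $\|\bar z-\xlm\|>0$. Your explicit treatment of the degenerate subcase $y=0$ is in fact slightly more careful than the paper, which writes $0<\obj{\bar j}(\bar z)-\obj{\bar j}(\xlm)$ even though the definition of $\hat{\Feas}(\xlm)$ only guarantees $\geq$.
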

\begin{proof}
Let \( \bar{z} \in  \hat{\Feas}(\xlm) \), so that \( \obj{\bar{j}}(\bar{z}) \geq \obj{\bar{j}}(\xlm) \) for at least one \( \bar{j} \in \Idx{m} \). We know from Inequality~\ref{eq1.4} of Assumption~\ref{assumption2}, that
\[
0 < \obj{\bar{j}}(\bar{z}) - \obj{\bar{j}}(\xlm) \leq \Lconst \| \bar{z} - \xlm \|.
\]
By Property~\ref{property2_lemma3.1} of Lemma~\ref{lemma3.1}, we have
\[
V_{\muc}(\obj{\bar{j}}(\bar{z}) - \obj{\bar{j}}(\xlm)) \leq \muc.
\]
Thus,
\[
A_{\muc}(\obj{\bar{j}}(\bar{z}) - \obj{\bar{j}}(\xlm)) = [\obj{\bar{j}}(\bar{z}) - \obj{\bar{j}}(\xlm)] \cdot V_{\muc}(\obj{\bar{j}}(\bar{z}) - \obj{\bar{j}}(\xlm)) \leq \muc \Lconst \| \bar{z} - \xlm \|.
\]

Since \( \bar{z} \in \hat{\Feas}(\xlm) \) implies \( \| \bar{z} - \xlm \| > 0 \), and if \( \rhoc > 0 \), \( 0 < \muc < \min\left\{ 1, \frac{\rhoc}{\Lconst} \right\} \), then
\[
\gdf_{\bar{j}, \muc, \rhoc, \xlm}(\bar{z}) \leq \muc \Lconst \| \bar{z} - \xlm \| - \rhoc \| \bar{z} - \xlm \| < 0 = \gdf_{\bar{j}, \muc, \rhoc, \xlm}(\xlm),
\]
which implies \( -\gdf_{j, \muc, \rhoc, \xlm}(\bar{z}) > -\gdf_{j, \muc, \rhoc, \xlm}(\xlm) \) for at least one \( j \).
\end{proof}
\begin{theorem}\label{theorem_c1}
If \( \rhoc > 0 \) and \( 0 < \muc < \min\left\{ 1, \frac{\rhoc}{\Lconst} \right\} \), then \( \xlm \) is a strict local weak efficient solution of \( -\gdf_{\muc,\rhoc,\xlm} \). If, in addition, \( \xlm \) be a global weak efficient solution of \( \fvec \) over \( \Feas \), then \( \gdf_{j, \muc, \rhoc, \xlm}(\xlm) < 0 \) for at least one \( j \).
\end{theorem}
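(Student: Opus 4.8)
The plan is to read the first conclusion as condition (C1) of Definition~\ref{def_mogdf} and to obtain it almost directly from Theorem~\ref{Lemma3.2}; the only genuine step is to see that a small enough punctured neighbourhood of $\xlm$ lies inside $\hat{\Feas}(\xlm)$. First I would record that $\gdf_{j,\muc,\rhoc,\xlm}(\xlm) = A_\muc(0) = 0$ for every $j \in \Idx{m}$, which is the reference value against which everything is compared. Since $\xlm \in \operatorname{int}(\Feas)$ is a local weak efficient solution of $(MOP)$, I can pick $r_0 > 0$ with $N(\xlm,r_0) \subseteq \operatorname{int}(\Feas)$ and with no $z \in N(\xlm,r_0)$ satisfying $\fvec(z) < \fvec(\xlm)$; then every $z \in N(\xlm,r_0)$ with $z \ne \xlm$ has $\obj{j}(z) \ge \obj{j}(\xlm)$ for some $j$, i.e.\ $z \in \hat{\Feas}(\xlm)$ by \eqref{eq_mop_basin}. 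Applying Theorem~\ref{Lemma3.2} with the stated parameters $\rhoc > 0$, $0 < \muc < \min\{1,\rhoc/\Lconst\}$ then yields, for each such $z$, an index $j$ with $\gdf_{j,\muc,\rhoc,\xlm}(z) < \gdf_{j,\muc,\rhoc,\xlm}(\xlm) = 0$, that is $-\gdf_{j,\muc,\rhoc,\xlm}(z) > -\gdf_{j,\muc,\rhoc,\xlm}(\xlm)$; hence no $z \ne \xlm$ in $N(\xlm,r_0)$ satisfies $-\gdf_{j,\muc,\rhoc,\xlm}(z) \le -\gdf_{j,\muc,\rhoc,\xlm}(\xlm)$ for all $j$, which gives the asserted strict local weak efficiency of $\xlm$ for $-\gdf_{\muc,\rhoc,\xlm}$, and in particular condition (C1).

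For the second conclusion, since $\gdf_{j,\muc,\rhoc,\xlm}(\xlm) = 0$ for every $j$, I would establish it in the equivalent sharpened form: under the extra hypothesis, $\gdf_{j,\muc,\rhoc,\xlm}(z) < 0$ for at least one $j$ at \emph{every} $z \in \Feas \setminus \{\xlm\}$, so that $\xlm$ is a strict global weak efficient solution of $-\gdf_{\muc,\rhoc,\xlm}$ over $\Feas$. The key observation is that global weak efficiency of $\xlm$ for $\fvec$ forbids any $z$ with $\fvec(z) < \fvec(\xlm)$, so every $z \in \Feas \setminus \{\xlm\}$ admits an index $\bar j$ with $\obj{\bar j}(z) \ge \obj{\bar j}(\xlm)$; for such $z$ and $\bar j$ I would simply rerun the estimate from the proof of Theorem~\ref{Lemma3.2}, using the Lipschitz bound~\eqref{eq1.4}, property~\ref{property2_lemma3.1} of Lemma~\ref{lemma3.1}, and $0 < \muc < \rhoc/\Lconst$:
\begin{equation*}
  \gdf_{\bar j,\muc,\rhoc,\xlm}(z) = A_\muc\big(\obj{\bar j}(z) - \obj{\bar j}(\xlm)\big) - \rhoc \| z - \xlm \| \le \muc \Lconst \| z - \xlm \| - \rhoc \| z - \xlm \| < 0.
\end{equation*}
This estimate never uses that $z$ lies in the interior of $\Feas$ (only $\| z - \xlm \| > 0$), so it also covers boundary points of $\Feas$.

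The hard part here is not analysis but getting the definitions to interlock correctly: recognising that ``$\xlm$ is a local (resp.\ global) weak efficient solution of $(MOP)$'' is exactly what forces the punctured neighbourhood of $\xlm$ (resp.\ all of $\Feas \setminus \{\xlm\}$) into $\hat{\Feas}(\xlm)$ as defined in \eqref{eq_mop_basin}, which is what makes Theorem~\ref{Lemma3.2} applicable; and keeping track of the fact that one actually proves the stronger statement that no nearby $z \ne \xlm$ weakly dominates $\xlm$ componentwise for $-\gdf_{\muc,\rhoc,\xlm}$, which is what earns the qualifier ``strict'' and simultaneously delivers (C1).
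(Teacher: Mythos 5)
Your proof is correct and follows essentially the same route as the paper: reduce both claims to Theorem~\ref{Lemma3.2} by observing that local (resp.\ global) weak efficiency of \( \xlm \) for \( \fvec \) places a punctured neighbourhood of \( \xlm \) (resp.\ all of \( \Feas\setminus\{\xlm\} \)) inside \( \hat{\Feas}(\xlm) \). You are in fact slightly more careful than the paper on the second claim, both in reading the (evidently mistyped) conclusion as \( \gdf_{j,\muc,\rhoc,\xlm}(z)<0 \) for some \( j \) at every \( z\neq\xlm \), and in rerunning the estimate directly so that boundary points of \( \Feas \) --- which Theorem~\ref{Lemma3.2} as stated does not cover, since \( \hat{\Feas}(\xlm)\subseteq\operatorname{int}(\Feas) \) --- are handled.
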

\begin{proof}
Assume \( \xlm \in \mathrm{int}(\Feas) \) is a strict local weak efficient solution of \( (MOP) \). Then there exists an \( \epsilon \)-neighborhood \( N(\xlm,\epsilon) \subset \Feas \) such that no \( x \in N(\xlm,\epsilon) \) satisfies \( \fvec(z) < \fvec(\xlm) \). This implies \( \obj{j}(z) \geq \obj{j}(\xlm) \) for at least one \( j \), for all \( x \in N(\xlm,\epsilon) \). By Theorem~\ref{Lemma3.2}, if \( \rhoc > 0 \) and \( 0 < \muc < \min\left\{ 1, \frac{\rhoc}{\Lconst} \right\} \), then
\[
- \gdf_{j, \muc, \rhoc, \xlm}(z) > - \gdf_{j, \muc, \rhoc, \xlm}(\xlm), \quad \text{for at least one } j, \quad \forall x \in N(\xlm,\epsilon) \setminus \{\xlm\}.
\]
Thus, \( \xlm \) is a strict local weak efficient solution of \( -\gdf_{\muc,\rhoc,\xlm} \).

If \( \xlm \) is a global weak efficient solution of \( \fvec \) over \( \Feas \), then \( \obj{j}(z) \geq \obj{j}(\xlm) \) holds for at least one \( j \) and for all \( x \in \Feas \). The result then follows from Theorem~\ref{Lemma3.2}.
\end{proof}
\begin{theorem}\label{Lemma3.3}
Let \( \mathcal{I}_1 \), \( \mathcal{I}_2 \), and \( \mathcal{I}_3 \) be three partitions of \( \Idx{m} \) such that
\begin{enumerate}
    \item For all \( j \in \mathcal{I}_1 \), \( \obj{j}(z) = \obj{j}(\xlm) \);
    \item For all \( j \in \mathcal{I}_2 \), \( \obj{j}(z) > \obj{j}(\xlm) \);
    \item For all \( j \in \mathcal{I}_3 \), \( \obj{j}(z) < \obj{j}(\xlm) \).
\end{enumerate}
Consider \( \bar{z} \in \operatorname{int}(\Feas) \setminus \xlm \), such that there exists \( \mult \in \Rplus{m} \setminus \{0^m\} \) satisfying
\[
\sum_{i=1}^{m} \mult_i \nabla \obj{i}(\bar{z}) = 0
\]
and \( \obj{j}(\bar{z}) \geq \obj{j}(\xlm) \) holds for at least one \( j \). Let \( s \) be a feasible direction satisfying \( s^T (\bar{z} - \xlm) > 0 \), and for the partitions \( \mathcal{P}_1, \mathcal{P}_2, \mathcal{P}_3 \) of \( \mathcal{I}_1 \), with partitions \( \mathcal{Q}_1, \mathcal{Q}_2, \mathcal{Q}_3 \) of \( \mathcal{I}_2 \) and partitions \( \mathcal{R}_1, \mathcal{R}_2, \mathcal{R}_3 \) of \( \mathcal{I}_3 \), the following conditions hold:

\begin{enumerate}[label=1\alph*.]
    \item \( s^T \nabla \obj{i}(\bar{z}) = 0 \), for all \( i \in \mathcal{P}_1 \);\label{case_1a}
    \item \( s^T \nabla \obj{i}(\bar{z}) > 0 \), for all \( i \in \mathcal{P}_2 \);\label{case_1b}
    \item \( s^T \nabla \obj{i}(\bar{z}) < 0 \), for all \( i \in \mathcal{P}_3 \);\label{case_1c}
\end{enumerate}
\begin{enumerate}[label=2\alph*.]
    \item \( s^T \nabla \obj{i}(\bar{z}) = 0 \), for all \( i \in \mathcal{Q}_1 \);\label{case_2a}
    \item \( s^T \nabla \obj{i}(\bar{z}) > 0 \), for all \( i \in \mathcal{Q}_2 \);\label{case_2b}
    \item \( s^T \nabla \obj{i}(\bar{z}) < 0 \), for all \( i \in \mathcal{Q}_3 \);\label{case_2c}
\end{enumerate}
\begin{enumerate}[label=3\alph*.]
    \item \( s^T \nabla \obj{i}(\bar{z}) = 0 \), for all \( i \in \mathcal{R}_1 \);\label{case_3a}
    \item \( s^T \nabla \obj{i}(\bar{z}) > 0 \), for all \( i \in \mathcal{R}_2 \);\label{case_3b}
    \item \( s^T \nabla \obj{i}(\bar{z}) < 0 \), for all \( i \in \mathcal{R}_3 \).\label{case_3c}
\end{enumerate}
    

For
\begin{equation*}
\begin{aligned}
\rhoc > \max \Bigg\{ 0,\ 
& \frac{
    \mathcal{C} \cdot 
    \max_{j \in \mathcal{I}_3} \left(s^T \nabla \obj{j}(\bar{z})\right) \cdot 
    \|\bar{z} - \xlm\|
}{
    s^T (\bar{z} - \xlm)
}
\Bigg\} \quad \text{and}
\\
0 < \muc < \min \Bigg\{ 1,\ 
& \frac{\rhoc}{\|\bar{z} - \xlm\|} \cdot 
  \frac{s^T (\bar{z} - \xlm)}{
    \max_{i \in \mathcal{P}_2} \left(s^T \nabla \obj{i}(\bar{z})\right)}, \\
& \frac{\rhoc}{\|\bar{z} - \xlm\|} \cdot 
  \frac{s^T (\bar{z} - \xlm)}{
    \max_{i \in \mathcal{Q}_2} \left(s^T \nabla \obj{i}(\bar{z})\right)} 
\Bigg\} \quad \text{for } c' = 0, \\
\text{or~~} 0 < \muc < \min \Bigg\{ 1,\ 
& \frac{\rhoc}{\|\bar{z} - \xlm\|} \cdot 
  \frac{s^T (\bar{z} - \xlm)}{
    \max_{i \in \mathcal{P}_2} \left(s^T \nabla \obj{i}(\bar{z})\right)}, \\
& \frac{\rhoc}{\|\bar{z} - \xlm\|} \cdot 
  \frac{s^T (\bar{z} - \xlm)}{
    \max_{i \in \mathcal{Q}_2} \left(s^T \nabla \obj{i}(\bar{z})\right)}, \\
& \frac{\rhoc}{c' \Lconst \|\bar{z} - \xlm\|^2} \cdot 
  \frac{s^T (\bar{z} - \xlm)}{
    \max_{i \in \mathcal{Q}_3} \left( - s^T \nabla \obj{i}(\bar{z}) \right)} 
\Bigg\} \quad \text{for } c' > 0,
\end{aligned}
\end{equation*}
the direction \( s \) constitutes a descent direction of \( \gdf_{\muc, \rhoc, \xlm} \) at \( \bar{z} \).
\end{theorem}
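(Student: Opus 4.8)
The plan is to verify the defining inequality of a (vector) descent direction directly: we must show $s^{T}\nabla\gdf_{j,\muc,\rhoc,\xlm}(\bar z)<0$ for every $j\in\Idx{m}$. Since $\bar z\neq\xlm$ the map $z\mapsto\|z-\xlm\|$ is differentiable at $\bar z$, and $A_\muc(y)=y\,V_\muc(y)$ is $C^{1}$ because $V_\muc$ is; hence each component of $\gdf_{\muc,\rhoc,\xlm}$ is differentiable at $\bar z$ with
\[
s^{T}\nabla\gdf_{j,\muc,\rhoc,\xlm}(\bar z)=A_\muc'\!\big(\obj{j}(\bar z)-\obj{j}(\xlm)\big)\,s^{T}\nabla\obj{j}(\bar z)\;-\;\rhoc\,\frac{s^{T}(\bar z-\xlm)}{\|\bar z-\xlm\|}.
\]
Put $\delta:=\rhoc\,s^{T}(\bar z-\xlm)/\|\bar z-\xlm\|$; by hypothesis $s^{T}(\bar z-\xlm)>0$ and $\rhoc>0$, so $\delta>0$, and it suffices to prove for each $j$ that $A_\muc'\!\big(\obj{j}(\bar z)-\obj{j}(\xlm)\big)\,s^{T}\nabla\obj{j}(\bar z)<\delta$. (The hypothesis that $\bar z$ is a Fritz John point of $(MOP)$ is not actually invoked in the computation below; it is recorded because this lemma is the mechanism that will later rule out Fritz John points of $\gdf_{\muc,\rhoc,\xlm}$ inside $\hat{\Feas}(\xlm)$, i.e. condition (C2).)

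Next I read off the sign behaviour of $A_\muc'$ from (V1)--(V2) and Lemma~\ref{lemma3.1}: $A_\muc'(0)=\muc$; $A_\muc'(y)\le\muc$ for all $y>0$ (Lemma~\ref{lemma3.1}(\ref{property4_lemma3.1})); and $A_\muc'(y)=V_\muc(y)+y\,V_\muc'(y)>0$ for all $y<0$, since there $V_\muc(y)\ge c\muc>0$ and $y\,V_\muc'(y)>0$. With the partition $\Idx{m}=\mathcal I_1\cup\mathcal I_2\cup\mathcal I_3$ and its refinements, the required bound then splits into cases. Whenever $s^{T}\nabla\obj{j}(\bar z)=0$ the first term vanishes, which disposes of $\mathcal P_1,\mathcal Q_1,\mathcal R_1$. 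Whenever $s^{T}\nabla\obj{j}(\bar z)<0$ and $A_\muc'\!\big(\obj{j}(\bar z)-\obj{j}(\xlm)\big)\ge0$ — true for $\mathcal P_3$ (argument $0$, $A_\muc'=\muc$) and $\mathcal R_3$ (argument $<0$, $A_\muc'>0$) — the first term is $\le 0<\delta$. The substantive cases are those with $s^{T}\nabla\obj{j}(\bar z)>0$: for $\mathcal P_2$ the term equals $\muc\,s^{T}\nabla\obj{j}(\bar z)\le\muc\max_{i\in\mathcal P_2}s^{T}\nabla\obj{i}(\bar z)<\delta$ by the stated bound on $\muc$; for $\mathcal Q_2$, using $A_\muc'\le\muc$ on positive arguments, the term is $\le\muc\max_{i\in\mathcal Q_2}s^{T}\nabla\obj{i}(\bar z)<\delta$; and for $\mathcal R_2$, where $A_\muc'>0$ but is no longer bounded by $\muc$, Assumption~\ref{assumption1} together with $|\obj{j}(\bar z)-\obj{j}(\xlm)|\le\Lconst\|\bar z-\xlm\|\le\Lconst\Kconst$ give $A_\muc'\!\big(\obj{j}(\bar z)-\obj{j}(\xlm)\big)\le\mathcal C$ (we only use that $\mathcal C$ bounds $A_\muc'$ above on $[-\Lconst\Kconst,0]$, which holds by continuity of $A_\muc'$), whence the term is $\le\mathcal C\max_{j\in\mathcal I_3}s^{T}\nabla\obj{j}(\bar z)<\delta$ by exactly the stated lower bound on $\rhoc$. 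Empty sub-partitions impose no constraint, the associated $\min$/$\max$ being read as $+\infty$/$-\infty$.

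The one remaining possibility is $c'>0$ together with $j\in\mathcal Q_3$, i.e. $\obj{j}(\bar z)>\obj{j}(\xlm)$ yet $s^{T}\nabla\obj{j}(\bar z)<0$; this is the delicate case, because when $c'>0$ the derivative $A_\muc'(y)$ for $y>0$ need not be nonnegative (only $A_\muc'(y)=V_\muc(y)+yV_\muc'(y)\ge c\muc-yc'\muc$), so multiplying by a negative $s^{T}\nabla\obj{j}(\bar z)$ could a priori give a positive contribution. On the set where $A_\muc'(y)<0$ for $y>0$ one has $|A_\muc'(y)|\le yc'\muc\le c'\muc\Lconst\|\bar z-\xlm\|$ using Assumption~\ref{assumption2}, so in every case the term is $\le c'\muc\Lconst\|\bar z-\xlm\|\,\big(-s^{T}\nabla\obj{j}(\bar z)\big)\le c'\muc\Lconst\|\bar z-\xlm\|\max_{i\in\mathcal Q_3}\big(-s^{T}\nabla\obj{i}(\bar z)\big)<\delta$, which is precisely the third entry in the $\muc$-bound for $c'>0$ (and when $c'=0$, $A_\muc'(y)=\muc>0$ makes the term negative, so no such entry is needed). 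Combining all nine blocks, $s^{T}\nabla\gdf_{j,\muc,\rhoc,\xlm}(\bar z)<0$ for every $j\in\Idx{m}$, i.e. $s$ is a descent direction of $\gdf_{\muc,\rhoc,\xlm}$ at $\bar z$.

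I expect the principal obstacle to be the bookkeeping around the sign and magnitude of $A_\muc'$ on positive versus negative arguments — in particular that $A_\muc'\ge0$ can fail for large positive arguments once $c'>0$, which is what forces the extra $\mathcal Q_3$-term in the bound on $\muc$ and the Lipschitz estimate of Assumption~\ref{assumption2}, and that $A_\muc'$ on negative arguments is only controlled through compactness (Assumption~\ref{assumption1}), not through $\muc$. The other difficulty, absent in the scalar theory of~\cite{duanli2010global}, is that each objective $\obj{j}$ carries its own sign pattern relative to $\xlm$ and relative to $s$, so the single inequality must be made to hold simultaneously over all of $\mathcal P_k,\mathcal Q_k,\mathcal R_k$; taking $\rhoc$ large and $\muc$ small in the stated coupled manner is exactly the device that decouples these blocks.
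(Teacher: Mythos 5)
Your proposal is correct and follows essentially the same route as the paper: the same componentwise gradient identity, the same sign analysis of $A_\muc'$ via Lemma~\ref{lemma3.1} and (V1)--(V2), the same nine-block case split over $\mathcal P_k,\mathcal Q_k,\mathcal R_k$, and the same identification of $\mathcal Q_3$ with $c'>0$ as the delicate case handled through $-c'\muc\le V_\muc'\le 0$ and the Lipschitz bound. Your reading of $\mathcal C$ as a uniform bound on $A_\muc'$ over the compact range of arguments is, if anything, a slightly cleaner justification of the $\mathcal R_2$ step than the paper's pointwise use of $\mathcal C=A_\muc'(\obj{i}(\bar z)-\obj{i}(\xlm))$.
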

\begin{proof}
The central idea of the proof involves constructing three distinct partitions of the index set \( \Idx{m} \). For each partition, we identify a feasible direction \( d \) such that \( s^T(\bar{z} - \xlm) > 0 \), and demonstrate that \( d \) serves as a descent direction under suitable values of \( \rhoc \) and \( \muc \). The analysis proceeds on a case-by-case basis, adapted to the structural properties of each partition.\\
Since \( \bar{z} \) is a critical point of \( \fvec \), we have
\begin{align*}
  \sum_{j=1}^m \mult_j \nabla \obj{j}(\bar{z}) = 0, \quad \mult \in \Rplus{m}, \mult \neq \mathbf{0}^{m}.
\end{align*}
Then $|\mathcal{P}_3|$, $|\mathcal{Q}_3|$ and $|\mathcal{R}_3|$ are always less than $|\Idx{m}|$; if not, it would lead to a contradiction that \( \bar{z} \) is a critical point of \( \fvec \).
\par Consider {\bm{$j \in \mathcal{I}_1$}}. Then for all \( j \in \mathcal{I}_1 \), the equality \( \obj{j}(z) = \obj{j}(\xlm) \) holds and it follows from Lemma~\ref{lemma3.1} that
\[
A'_{\muc} \left( \obj{i}(\bar{z}) - \obj{i}(\xlm) \right) = A'_{\muc}(0) = \muc.
\]
Accordingly, the following cases arise:
\begin{enumerate}[label=\textbf{Case 1\alph*.}]
    \item If \( i \in \mathcal{P}_1 \), then \( s^T \nabla \obj{i}(\bar{z}) = 0 \). For \( 0 < \muc < 1 \) and \( \rhoc > 0 \), it follows that:
    \begin{equation*}
        s^T \nabla \gdf_{i,\muc,\rhoc,\xlm}(\bar{z}) = - \rhoc \frac{s^T (\bar{z} - \xlm)}{\|\bar{z} - \xlm\|} < 0.
    \end{equation*}



\item If \( i \in \mathcal{P}_2 \), then \( s^T \nabla \obj{i}(\bar{z}) > 0 \) and for
\[
0 < \muc < \min \left\{
1,\ 
\frac{\rhoc}{\|\bar{z} - \xlm\|} \cdot 
\frac{s^T (\bar{z} - \xlm)}{
\max_{i \in \mathcal{P}_2} \left( s^T \nabla \obj{i}(\bar{z}) \right)}
\right\} \text{~with~} \rhoc > 0,
\]
it follows that:

\begin{align*}
    s^T \nabla \gdf_{i,\muc,\rhoc,\xlm}(\bar{z}) 
    &= \muc \cdot s^T \nabla \obj{i}(\bar{z}) - \rhoc \frac{s^T (\bar{z} - \xlm)}{\|\bar{z} - \xlm\|} < 0.
\end{align*}

    In particular, from (\ref{eq1.8}), we have \( \|\nabla \obj{i}(\bar{z})\| \leq \mathcal{M} \). Choosing \( s = \bar{z} - \xlm \), the following estimate holds:
\begin{align*}
    0 < \muc &< \min \left\{ 1, \frac{\rhoc}{\mathcal{M}} \right\} \\
    &\leq \min \left\{ 1, \frac{\rhoc}{\|\bar{z} - \xlm\|} \cdot \frac{\|\bar{z} - \xlm\|^2}{\|\bar{z} - \xlm\| \cdot \|\nabla \obj{i}(\bar{z})\|} \right\} \\
    &\leq \min \left\{ 1, \frac{\rhoc}{\|\bar{z} - \xlm\|} \cdot \frac{s^T(\bar{z} - \xlm)}{\max_i \left( s^T \nabla \obj{i}(\bar{z}) \right)} \right\},
\end{align*}
which demonstrates that, under the conditions \( 0 < \muc < \min \left\{ 1, \frac{\rhoc}{\mathcal{M}} \right\} \), \( \rhoc > 0 \) and for \( s = \bar{z} - \xlm \), $s^T \nabla \gdf_{i,\muc,\rhoc,\xlm}(\bar{z})<0$.

\item If \( i \in \mathcal{P}_3 \), then \( s^T \nabla \obj{i}(\bar{z}) < 0 \) and for \( 0 < \muc < 1 \), \( \rhoc > 0 \), it follows that:
\begin{align*}
    s^T \nabla \gdf_{i,\muc,\rhoc,\xlm}(\bar{z}) 
    &= \muc \cdot s^T \nabla \obj{i}(\bar{z}) - \rhoc \frac{s^T (\bar{z} - \xlm)}{\|\bar{z} - \xlm\|} < 0.
\end{align*}
\end{enumerate}
\par Consider {\bm{$j \in \mathcal{I}_2$}}. Then \( \obj{j}(z) > \obj{j}(\xlm) \) holds for all \( j \in \mathcal{I}_2 \), and by Property~(\ref{property4_lemma3.1}) of Lemma~\ref{lemma3.1}, it follows that:
\[
A'_{\muc}(\obj{j}(\bar{z}) - \obj{j}(\xlm)) \leq \muc.
\]
Accordingly, the following cases arise:

\begin{enumerate}[label=\textbf{Case 2\alph*.}]

\item If \( i \in \mathcal{Q}_1 \), then \( s^T \nabla \obj{i}(\bar{z}) = 0 \) and for \( 0 < \muc < 1 \), \( \rhoc > 0 \), it follows that:
\begin{equation*}
    s^T \nabla \gdf_{i,\muc,\rhoc,\xlm}(\bar{z}) = - \rhoc \frac{s^T (\bar{z} - \xlm)}{\|\bar{z} - \xlm\|} < 0.
\end{equation*}

\item If \( i \in \mathcal{Q}_2 \), then \( s^T \nabla \obj{i}(\bar{z}) > 0 \) and for
\[
0 < \muc < \min \left\{
1,\ 
\frac{\rhoc}{\|\bar{z} - \xlm\|} \cdot 
\frac{s^T (\bar{z} - \xlm)}{
\max_{i \in \mathcal{Q}_2} \left( s^T \nabla \obj{i}(\bar{z}) \right)}
\right\} \text{~with~} \rhoc > 0,
\]
it follows that:
\begin{align*}
    s^T \nabla \gdf_{i,\muc,\rhoc,\xlm}(\bar{z})
     &= A'_{\muc}\left( \obj{i}(\bar{z}) - \obj{i}(\xlm) \right) \cdot s^T \nabla \obj{i}(\bar{z}) - \rhoc \frac{s^T (\bar{z} - \xlm)}{\|\bar{z} - \xlm\|}\\
    &\leq \muc \cdot s^T \nabla \obj{i}(\bar{z}) - \rhoc \frac{s^T (\bar{z} - \xlm)}{\|\bar{z} - \xlm\|} < 0.
\end{align*}
Using (\ref{eq1.8}), we observe that \( \|\nabla \obj{i}(\bar{z})\| \leq \mathcal{M} \). By selecting \( s = \bar{z} - \xlm \), the following bound can be derived:
\begin{align*}
    0 < \muc &< \min\left\{1, \frac{\rhoc}{\mathcal{M}}\right\} \\
    &\leq \min\left\{1, \frac{\rhoc}{\|\bar{z} - \xlm\|} \cdot \frac{\|\bar{z} - \xlm\|^2}{\|\bar{z} - \xlm\| \cdot \|\nabla \obj{i}(\bar{z})\|}\right\} \\
    &\leq \min\left\{1, \frac{\rhoc}{\|\bar{z} - \xlm\|} \cdot \frac{s^T(\bar{z} - \xlm)}{\max_{i \in \mathcal{Q}_2} \left( s^T \nabla \obj{i}(\bar{z}) \right)}\right\}.
\end{align*}
Hence for \( 0 < \muc < \min \left\{ 1, \frac{\rhoc}{\mathcal{M}} \right\} \), \( \rhoc > 0 \), and with \( s = \bar{z} - \xlm \), \( s^T \nabla \gdf_{i,\muc,\rhoc,\xlm}(\bar{z}) < 0 \) is satisfied.


\item If \( i \in \mathcal{Q}_3 \), then \( s^T \nabla \obj{i}(\bar{z}) < 0 \), then two cases arise:
\begin{enumerate}[label=(\roman*)]
    \item For \( c' = 0 \): If \( 0 < \muc < 1 \) and \( \rhoc > 0 \), then from Lemma~\ref{lemma3.1}, we have:
\begin{align*}
A'_{\muc}(\obj{i}(\bar{z}) - \obj{i}(\xlm)) 
&= V_{\muc}(\obj{i}(\bar{z}) - \obj{i}(\xlm)) \\
&\quad + \big[ \obj{i}(\bar{z}) - \obj{i}(\xlm) \big]
 \cdot V'_{\muc}(\obj{i}(\bar{z}) - \obj{i}(\xlm)) \\
&= \muc > 0,
\end{align*}
    and hence,
        \begin{align*}
        s^T \nabla \gdf_{i,\muc,\rhoc,\xlm}(\bar{z}) 
        &= A'_{\muc}\left( \obj{i}(\bar{z}) - \obj{i}(\xlm) \right) \cdot s^T \nabla \obj{i}(\bar{z}) - \rhoc \frac{s^T (\bar{z} - \xlm)}{\|\bar{z} - \xlm\|} < 0.
    \end{align*}

    \item For \( c' > 0 \): If \( \rhoc > 0 \) and
    \[
    0 < \muc < \min \left\{ 1, \frac{\rhoc}{c' \Lconst \|\bar{z} - \xlm\|^2} \cdot \frac{s^T (\bar{z} - \xlm)}{\max_{i \in \mathcal{Q}_3} \left( - s^T \nabla \obj{i}(\bar{z}) \right)} \right\},
    \]
    then from the definition of \( V_{\muc} \), we have \( V_{\muc}(y) \geq c\muc > 0 \) for all \( y \), and
    \[
    -c'\muc \leq V'_{\muc}(y) \leq 0, \quad \text{for all } y \geq 0.
    \]
    Since \( \obj{i}(\bar{z}) - \obj{i}(\xlm) > 0 \), and by the Lipschitz condition (see Inequality~(\ref{eq1.4})), we have:
    \[
    0 < \obj{i}(\bar{z}) - \obj{i}(\xlm) \leq \Lconst \|\bar{z} - \xlm\|,
    \]
    and therefore,
    \begin{align*}
    A'_{\muc}(\obj{i}(\bar{z}) - \obj{i}(\xlm)) 
    &= V_{\muc}(\obj{i}(\bar{z}) - \obj{i}(\xlm)) \\
   &\quad + \left[\obj{i}(\bar{z}) - \obj{i}(\xlm)\right] \cdot  V'_{\muc}(\obj{i}(\bar{z}) - \obj{i}(\xlm)) \\
    &> 0 + \Lconst \|\bar{z} - \xlm\| \cdot (-c'\muc) = -c'\muc \Lconst \|\bar{z} - \xlm\|.
    \end{align*}
    Consequently,
    \begin{align*}
    s^T \nabla \gdf_{i, \muc, \rhoc, \xlm}(\bar{z}) 
    &= A'_{\muc}(\obj{i}(\bar{z}) - \obj{i}(\xlm)) \cdot s^T \nabla \obj{i}(\bar{z}) - \rhoc \frac{s^T(\bar{z} - \xlm)}{\|\bar{z} - \xlm\|} \\
    &< -c'\muc \Lconst \|\bar{z} - \xlm\| \cdot s^T \nabla \obj{i}(\bar{z}) - \rhoc \frac{s^T(\bar{z} - \xlm)}{\|\bar{z} - \xlm\|} < 0.
    \end{align*}
    Now, from Inequality~(\ref{eq1.7}), we have \( \|\bar{z} - \xlm\| \leq \Kconst \) and from Inequality~(\ref{eq1.8}), \( \|\nabla \obj{i}(\bar{z})\| \leq \mathcal{M} \). If \( s = \bar{z} - \xlm \), then:
    \begin{align*}
    0 < \muc &< \min \left\{ 1, \frac{\rhoc}{c' \Lconst \Kconst \mathcal{M}} \right\} \\
    &\leq \min \left\{ 1, \frac{\rhoc}{c' \Lconst \|\bar{z} - \xlm\|^2} \cdot \frac{\|\bar{z} - \xlm\|^2}{\|\bar{z} - \xlm\| \cdot \|\nabla \obj{i}(\bar{z})\|} \right\} \\
    &\leq \min \left\{ 1, \frac{\rhoc}{c' \Lconst \|\bar{z} - \xlm\|^2} \cdot \frac{s^T(\bar{z} - \xlm)}{\max_i \left( - s^T \nabla \obj{i}(\bar{z}) \right)} \right\}.
    \end{align*}

    It is important to note that if \( c' \) or \( \lim_{\muc \to 0} c' \) is a positive constant, then for sufficiently small \( \muc \), one can always find \( \muc \in \left(0, \min \left\{1, \frac{\rhoc}{c' \Lconst \Kconst \mathcal{M}} \right\} \right) \) satisfying the above condition. Furthermore, if \( \lim_{\muc \to 0} c' = \infty \), then by the definition of \( c' \), we have \( \lim_{\muc \to 0} c'\muc = 0 \), which ensures the existence of a sufficiently small \( \muc \) such that \( 0 < c'\muc < \frac{\rhoc}{\Lconst \Kconst \mathcal{M}} \).
    \end{enumerate}
    
\end{enumerate}

\par Consider {\bm{$j \in \mathcal{I}_3$}}. Since \( \obj{j}(\bar{z}) < \obj{j}(\xlm) \) holds, it follows from the definitions of \( A_{\muc} \) and \( V_{\muc} \) that:
\[
A'_{\muc} \left( \obj{i}(\bar{z}) - \obj{i}(\xlm) \right) \geq c\muc > 0.
\]
Accordingly, the following cases arise:

\begin{enumerate}[label=\textbf{Case 3\alph*.}]
    \item If \( i \in \mathcal{R}_1 \), then \( s^T \nabla \obj{i}(\bar{z}) = 0 \) and for \( 0 < \muc < 1 \), \( \rhoc > 0 \), it follows that:
    \begin{equation*}
        s^T \nabla \gdf_{i,\muc,\rhoc,\xlm}(\bar{z}) = - \rhoc \frac{s^T (\bar{z} - \xlm)}{\|\bar{z} - \xlm\|} < 0.
    \end{equation*}

    \item If \( i \in \mathcal{R}_2 \), then \( s^T \nabla \obj{i}(\bar{z}) > 0 \) and for
    \begin{equation*}
    0 < \muc < 1 \quad \text{and} \quad \rhoc > \max \left\{ 0,\frac{\mathcal{C} \cdot \max_{i \in \mathcal{R}_2} \left(s^T \nabla \obj{i}(\bar{z})\right) \cdot \|\bar{z} - \xlm\|}{s^T (\bar{z} - \xlm)} \right\},
    \end{equation*}
    it follows that:
    \begin{align*}
        s^T \nabla \gdf_{i,\muc,\rhoc,\xlm}(\bar{z}) 
        &= A'_{\muc}\left( \obj{i}(\bar{z}) - \obj{i}(\xlm) \right) \cdot s^T \nabla \obj{i}(\bar{z}) - \rhoc \frac{s^T (\bar{z} - \xlm)}{\|\bar{z} - \xlm\|}\\
         &= \mathcal{C} \cdot s^T \nabla \obj{i}(\bar{z}) - \rhoc \frac{s^T (\bar{z} - \xlm)}{\|\bar{z} - \xlm\|} < 0.
    \end{align*}

\item If \( i \in \mathcal{R}_3 \), then \( s^T \nabla \obj{i}(\bar{z}) < 0 \) and for \( 0 < \muc < 1 \), \( \rhoc > 0 \), it follows that:
\begin{align*}
    s^T \nabla \gdf_{i,\muc,\rhoc,\xlm}(\bar{z}) 
    &= A'_{\muc}\left( \obj{i}(\bar{z}) - \obj{i}(\xlm) \right) \cdot s^T \nabla \obj{i}(\bar{z}) - \rhoc \frac{s^T (\bar{z} - \xlm)}{\|\bar{z} - \xlm\|} < 0.
\end{align*}
\end{enumerate}
\par Consequently, when {\bm{$j \in \mathcal{I}_3$}} and for
\begin{equation*}
0 < \muc < 1 \quad \text{and} \quad \rhoc > \max \left\{ 0,\frac{\mathcal{C} \cdot \max_{j \in \mathcal{I}_3} \left(s^T \nabla \obj{j}(\bar{z})\right) \cdot \|\bar{z} - \xlm\|}{s^T (\bar{z} - \xlm)} \right\},    
\end{equation*}
it follows that:
\begin{equation*}
    s^T \nabla \gdf_{j,\muc,\rhoc,\xlm}(\bar{z}) < 0.
\end{equation*}
\par By exhaustively analyzing all possible cases corresponding to the partitions of \( \Idx{m} \), it is established that \( s^T \nabla \gdf_{i,\muc,\rhoc,\xlm}(\bar{z}) \) remains strictly negative under appropriate selections of $\muc$ and $\rhoc$. Consequently, the direction \( s \) qualifies as a valid descent direction at \( \bar{z} \), thereby completing the proof.
\end{proof}

\begin{theorem}\label{Lemma3.4}
Let \( \mathcal{I}_1 \), \( \mathcal{I}_2 \), and \( \mathcal{I}_3 \) be three partitions of \( \Idx{m} \) such that
\begin{enumerate}
    \item For all \( j \in \mathcal{I}_1 \), \( \obj{j}(z) = \obj{j}(\xlm) \);
    \item For all \( j \in \mathcal{I}_2 \), \( \obj{j}(z) > \obj{j}(\xlm) \);
    \item For all \( j \in \mathcal{I}_3 \), \( \obj{j}(z) < \obj{j}(\xlm) \).
\end{enumerate}


Consider $\bar{z} \in int(\Feas) \setminus \xlm$ such that  $\bar{z}$ is not critical for $\fvec$, and $\obj{j}(\bar{z}) \geq \obj{j}(\xlm)$ holds for at least one $j$. Let \( s \) be a feasible direction satisfying \( s^T (\bar{z} - \xlm) > 0 \) and for the partitions \( \mathcal{Q}_1, \mathcal{Q}_2, \mathcal{Q}_3 \) of \( \mathcal{I}_2 \), and partitions \( \mathcal{R}_1, \mathcal{R}_2, \mathcal{R}_3 \) of \( \mathcal{I}_3 \), the following conditions hold:

\begin{enumerate}[label=2\alph*.]
    \item \( s^T \nabla \obj{i}(\bar{z}) = 0 \), for all \( i \in \mathcal{Q}_1 \);\label{case_2a_2nd lemma}
    \item \( s^T \nabla \obj{i}(\bar{z}) > 0 \), for all \( i \in \mathcal{Q}_2 \);\label{case_2b_2nd lemma}
    \item \( s^T \nabla \obj{i}(\bar{z}) < 0 \), for all \( i \in \mathcal{Q}_3 \);\label{case_2c_2nd lemma}
\end{enumerate}
\begin{enumerate}[label=3\alph*.]
    \item \( s^T \nabla \obj{i}(\bar{z}) = 0 \), for all \( i \in \mathcal{R}_1 \);\label{case_3a_2nd lemma}
    \item \( s^T \nabla \obj{i}(\bar{z}) > 0 \), for all \( i \in \mathcal{R}_2 \);\label{case_3b_2nd lemma}
    \item \( s^T \nabla \obj{i}(\bar{z}) < 0 \), for all \( i \in \mathcal{R}_3 \).\label{case_3c_2nd lemma}
\end{enumerate}
\begin{equation*}
\begin{aligned}
\text{For } \rhoc > \max \Bigg\{ 0,\ 
& \frac{
    \mathcal{C} \cdot 
    \max_{i \in \mathcal{R}_2} \left(s^T \nabla \obj{i}(\bar{z})\right) \cdot 
    \|\bar{z} - \xlm\|
}{
    s^T (\bar{z} - \xlm)
}
\Bigg\} \quad \text{and} \\
0 < \muc < \min \Bigg\{ 1,\ \frac{\rhoc}{\mathcal{M}},\ 
& \frac{\rhoc}{\|\bar{z} - \xlm\|} \cdot 
  \frac{s^T (\bar{z} - \xlm)}{
    \max_{i \in \mathcal{Q}_2} \left(s^T \nabla \obj{i}(\bar{z})\right)}
\Bigg\} \quad \text{for } c' = 0, \\
\text{or~~} 0 < \muc < \min \Bigg\{ 1,\ \frac{\rhoc}{\mathcal{M}},\ 
& \frac{\rhoc}{\|\bar{z} - \xlm\|} \cdot 
  \frac{s^T (\bar{z} - \xlm)}{
    \max_{i \in \mathcal{Q}_2} \left(s^T \nabla \obj{i}(\bar{z})\right)}, \\
& \frac{\rhoc}{c' \Lconst \|\bar{z} - \xlm\|^2} \cdot 
  \frac{s^T (\bar{z} - \xlm)}{
    \max_{i \in \mathcal{Q}_3} \left(- s^T \nabla \obj{i}(\bar{z})\right)}
\Bigg\} \quad \text{for } c' > 0,
\end{aligned}
\end{equation*}

the direction \( s \) constitutes a descent direction of \( \gdf_{\muc, \rhoc, \xlm} \) at \( \bar{z} \).


\end{theorem}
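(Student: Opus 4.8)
The plan is to reproduce the exhaustive case analysis of the proof of Theorem~\ref{Lemma3.3}; the only structural change is that $\bar{z}$ is now non-critical for $\fvec$ rather than critical, which removes an ingredient rather than adds one. Observe first that $\bar{z}\in\operatorname{int}(\Feas)\setminus\{\xlm\}$ together with $\obj{j}(\bar{z})\ge\obj{j}(\xlm)$ for at least one $j$ places $\bar{z}\in\hat{\Feas}(\xlm)$, so this is exactly the situation in which a descent direction of $\gdf_{\muc,\rhoc,\xlm}$ is what the algorithm needs. For each fixed $j\in\Idx{m}$ the relevant directional derivative is
\[
s^T\nabla\gdf_{j,\muc,\rhoc,\xlm}(\bar{z})=A'_{\muc}\!\big(\obj{j}(\bar{z})-\obj{j}(\xlm)\big)\,s^T\nabla\obj{j}(\bar{z})-\rhoc\,\frac{s^T(\bar{z}-\xlm)}{\|\bar{z}-\xlm\|},
\]
and its second term is strictly negative since $s^T(\bar{z}-\xlm)>0$ by hypothesis; the task is to show the whole quantity is negative for every $j\in\Idx{m}$, which is precisely the statement that $s$ is a descent direction of $\gdf_{\muc,\rhoc,\xlm}$ at $\bar{z}$. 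I would organize the argument by the partition $\Idx{m}=\mathcal{I}_1\cup\mathcal{I}_2\cup\mathcal{I}_3$, with $\mathcal{I}_2$ and $\mathcal{I}_3$ further split by the sign of $s^T\nabla\obj{j}(\bar{z})$ into $\mathcal{Q}_1,\mathcal{Q}_2,\mathcal{Q}_3$ and $\mathcal{R}_1,\mathcal{R}_2,\mathcal{R}_3$ respectively.

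First I would dispose of $j\in\mathcal{I}_1$, the only block not sub-partitioned in the statement. Here $\obj{j}(\bar{z})=\obj{j}(\xlm)$, so $A'_{\muc}(0)=\muc$ by Lemma~\ref{lemma3.1}(\ref{property4_lemma3.1}) and the expression collapses to $\muc\,s^T\nabla\obj{j}(\bar{z})-\rhoc\,s^T(\bar{z}-\xlm)/\|\bar{z}-\xlm\|$. If $s^T\nabla\obj{j}(\bar{z})\le0$ this is immediately negative; if $s^T\nabla\obj{j}(\bar{z})>0$ I would use $\|\nabla\obj{j}(\bar{z})\|\le\mathcal{M}$ from~\eqref{eq1.8} and take $s=\bar{z}-\xlm$, so that the Cauchy--Schwarz estimate $s^T(\bar{z}-\xlm)/\|\bar{z}-\xlm\|\le\|s\|$ holds with equality and $\muc\,s^T\nabla\obj{j}(\bar{z})\le\muc\mathcal{M}\|s\|<\rhoc\|s\|$ exactly because $\muc<\rhoc/\mathcal{M}$; this is the extra term appearing in the $\muc$-bound of the present theorem, which in Theorem~\ref{Lemma3.3} was instead produced inside the proof.

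The remaining blocks are treated exactly as the Case~2 and Case~3 analyses in the proof of Theorem~\ref{Lemma3.3}. For $j\in\mathcal{I}_2$ we have $A'_{\muc}(\obj{j}(\bar{z})-\obj{j}(\xlm))\le\muc$ by Lemma~\ref{lemma3.1}(\ref{property4_lemma3.1}); the subcase $\mathcal{Q}_1$ is immediate, $\mathcal{Q}_2$ follows from $\muc<\frac{\rhoc}{\|\bar{z}-\xlm\|}\cdot\frac{s^T(\bar{z}-\xlm)}{\max_{i\in\mathcal{Q}_2}(s^T\nabla\obj{i}(\bar{z}))}$, and $\mathcal{Q}_3$ splits on $c'$: for $c'=0$, $A'_{\muc}\equiv\muc>0$ on $[0,\infty)$ by Lemma~\ref{lemma3.1}(\ref{property2_lemma3.1}), so both terms are negative; for $c'>0$, the bounds $0<\obj{j}(\bar{z})-\obj{j}(\xlm)\le\Lconst\|\bar{z}-\xlm\|$ from~\eqref{eq1.4} and $V'_{\muc}\ge-c'\muc$ yield $A'_{\muc}(\obj{j}(\bar{z})-\obj{j}(\xlm))>-c'\muc\Lconst\|\bar{z}-\xlm\|$, and negativity is then secured by $\muc<\frac{\rhoc}{c'\Lconst\|\bar{z}-\xlm\|^2}\cdot\frac{s^T(\bar{z}-\xlm)}{\max_{i\in\mathcal{Q}_3}(-s^T\nabla\obj{i}(\bar{z}))}$. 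For $j\in\mathcal{I}_3$, $\obj{j}(\bar{z})<\obj{j}(\xlm)$ forces $A'_{\muc}(\obj{j}(\bar{z})-\obj{j}(\xlm))\ge c\muc>0$ from properties (V1)--(V2); the subcases $\mathcal{R}_1$ and $\mathcal{R}_3$ are immediate (both terms negative in the latter), while $\mathcal{R}_2$ needs the lower bound $\rhoc>\frac{\mathcal{C}\cdot\max_{i\in\mathcal{R}_2}(s^T\nabla\obj{i}(\bar{z}))\cdot\|\bar{z}-\xlm\|}{s^T(\bar{z}-\xlm)}$, where $\mathcal{C}$ denotes the uniform upper bound on $A'_{\muc}$ over the range of arguments that occur.

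I expect the only mild obstacle to be the bookkeeping in the $\mathcal{I}_1$ block: since it is not refined by the sign of $s^T\nabla\obj{j}(\bar{z})$ here, one has to check that the single quoted bound $\muc<\rhoc/\mathcal{M}$ really does cover it, which — as sketched above — requires either taking $s$ collinear with $\bar{z}-\xlm$ (so the Cauchy--Schwarz step is tight) or peeling off the trivial subcase $s^T\nabla\obj{j}(\bar{z})\le0$ first. Apart from that, moving from the critical to the non-critical setting costs nothing: the cardinality observation $|\mathcal{Q}_3|,|\mathcal{R}_3|<|\Idx{m}|$ used in Theorem~\ref{Lemma3.3} was a consequence of criticality and plays no role here, so omitting it is harmless. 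Collecting the three blocks gives $s^T\nabla\gdf_{j,\muc,\rhoc,\xlm}(\bar{z})<0$ for all $j\in\Idx{m}$ under the stated choices of $\rhoc$ and $\muc$, establishing the claim.
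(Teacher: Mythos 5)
Your proposal is correct, and on the $\mathcal{I}_2$ and $\mathcal{I}_3$ blocks it reproduces the paper's own case analysis essentially verbatim: the same use of $A'_{\muc}(y)\leq\muc$ for $y>0$, the same $c'=0$ versus $c'>0$ split on $\mathcal{Q}_3$ via the Lipschitz bound $0<\obj{i}(\bar{z})-\obj{i}(\xlm)\leq\Lconst\|\bar{z}-\xlm\|$, and the same lower bound on $\rhoc$ for $\mathcal{R}_2$. The genuine divergence is the $\mathcal{I}_1$ block. The paper never bounds $s^T\nabla\gdf_{j,\muc,\rhoc,\xlm}(\bar{z})$ for $j\in\mathcal{I}_1$; instead it runs a contradiction argument, assuming $\sum_j\bar{\eta}_j\nabla\gdf_{j,\muc,\rhoc,\xlm}(\bar{z})=0$, extracting $\muc\geq\rhoc/\mathcal{M}$ from $A'_{\muc}(0)=\muc$ and the gradient bound \eqref{eq1.8}, and concluding that $\bar{z}$ is not a critical point of $\gdf_{\muc,\rhoc,\xlm}$ --- which is where the hypothesis that $\bar{z}$ is non-critical for $\fvec$ enters. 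You instead prove the descent inequality directly for $j\in\mathcal{I}_1$ by splitting on the sign of $s^T\nabla\obj{j}(\bar{z})$ and using $\muc<\rhoc/\mathcal{M}$ with the Cauchy--Schwarz step made tight by $s=\bar{z}-\xlm$. Your route is arguably more faithful to the conclusion actually claimed (that $s$ is a descent direction, i.e., $s^T\nabla\gdf_{j,\muc,\rhoc,\xlm}(\bar{z})<0$ for \emph{every} $j$), since the paper's non-criticality argument leaves the $\mathcal{I}_1$ components of the directional derivative unexamined; what it buys the paper is that no sign condition on $s^T\nabla\obj{j}(\bar{z})$, $j\in\mathcal{I}_1$, and no collinearity of $s$ with $\bar{z}-\xlm$ need be assumed. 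The restriction you flag --- that for $j\in\mathcal{I}_1$ with $s^T\nabla\obj{j}(\bar{z})>0$ the bound $\muc<\rhoc/\mathcal{M}$ only closes the argument when $s=\bar{z}-\xlm$ --- is the same looseness the paper itself tolerates in its $\mathcal{P}_2$ and $\mathcal{Q}_2$ estimates, and is harmless for the only downstream use (Theorem~\ref{theorem_3.5} takes $s=z-\xlm$). Your observation that the cardinality remark $|\mathcal{Q}_3|,|\mathcal{R}_3|<|\Idx{m}|$ from Theorem~\ref{Lemma3.3} is a consequence of criticality and can be dropped here is also accurate.
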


\begin{proof}
As previously noted in Theorem \ref{Lemma3.3}, the proof relies on partitioning the index set \( \Idx{m} \) into three cases. For each, a feasible direction \( d \) with \( s^T(\bar{z} - \xlm) > 0 \) is shown to be a descent direction under suitable choices of \( \rhoc \) and \( \muc \).
\par Consider {\bm{$j \in \mathcal{I}_1$}}. Since \( \bar{z} \) is not a critical point of the original objective vector \( \fvec{} \), we have
\[
\sum_{j=1}^m \mult_j \nabla \obj{j}(\bar{z}) \neq 0, \quad \mult \in \Rplus{m}, \ \mult \neq \mathbf{0}^{m}.
\]

Suppose, for the sake of contradiction, that \( \bar{z} \) is a critical point of the smoothed scalarization \( \gdf_{\muc, \rhoc, \xlm} \). Then, there exists a vector \( \bar{\eta} \in \Rplus{m} \), with \( \bar{\eta} \neq \mathbf{0}^{m} \), such that
\[
\sum_{j=1}^m \bar{\eta}_j \nabla \gdf_{j,\muc,\rhoc,\xlm}(\bar{z}) = 0.
\]

Invoking Lemma~\ref{lemma3.1}, and noting that \( \obj{j}(\bar{z}) = \obj{j}(\xlm) \) for all \( j \in \mathcal{I}_1 \), we obtain
\[
A'_{\muc}(\obj{j}(\bar{z}) - \obj{j}(\xlm)) = A'_{\muc}(0) = \muc.
\]
Substituting into the gradient expression yields
\begin{align*}
    0 &= \sum_{j=1}^m \bar{\eta}_j \nabla \gdf_{j,\muc,\rhoc,\xlm}(\bar{z}) \\
      &= \sum_{j=1}^m A'_{\muc}(0) \cdot \bar{\eta}_j \nabla \obj{j}(\bar{z}) - \rhoc \sum_{j=1}^m \bar{\eta}_j \frac{\bar{z} - \xlm}{\|\bar{z} - \xlm\|} \\
      &= \muc \sum_{j=1}^m \bar{\eta}_j \nabla \obj{j}(\bar{z}) - \rhoc \sum_{j=1}^m \bar{\eta}_j \frac{\bar{z} - \xlm}{\|\bar{z} - \xlm\|}.
\end{align*}

Rearranging, we obtain the identity
\[
\muc = \frac{\rhoc q}{\left\| \sum_{j=1}^m \bar{\eta}_j \nabla \obj{j}(\bar{z}) \right\|}, \quad \text{where } q = \sum_{j=1}^m \bar{\eta}_j.
\]

Now, from Inequality (\ref{eq1.8}), it follows that:
\[
\left\| \sum_{j=1}^m \bar{\eta}_j \nabla \obj{j}(\bar{z}) \right\| \leq \mathcal{M}q,
\]
which in turn implies:
\[
\muc \geq \frac{\rhoc}{\mathcal{M}}.
\]

Therefore, if \( \rhoc > 0 \) and \( 0 < \muc < \min \left\{1, \frac{\rhoc}{\mathcal{M}} \right\} \), the assumed criticality condition leads to a contradiction. Thus,
\[
\sum_{j=1}^m \bar{\eta}_j \nabla \gdf_{j,\muc,\rhoc,\xlm}(\bar{z}) \neq 0, \quad \text{for all } j \in \mathcal{I}_1,
\]
which confirms that \( \bar{z} \) is not a critical point of \( \gdf_{\muc, \rhoc, \xlm} \) under the stated conditions.

Consider {\bm{$j \in \mathcal{I}_2$}}. Then \( \obj{j}(z) > \obj{j}(\xlm) \) holds for all \( j \in \mathcal{I}_2 \), and by Property~(\ref{property4_lemma3.1}) of Lemma~\ref{lemma3.1}, it follows that:
\[
A'_{\muc}(\obj{j}(\bar{z}) - \obj{j}(\xlm)) \leq \muc.
\]
Accordingly, the following cases arise:

\begin{enumerate}[label=\textbf{Case 2\alph*.}]

\item If \( i \in \mathcal{Q}_1 \), then \( s^T \nabla \obj{i}(\bar{z}) = 0 \) and  For \( 0 < \muc < 1 \) and \( \rhoc > 0 \), it follows that:
    \begin{equation*}
        s^T \nabla \gdf_{i,\muc,\rhoc,\xlm}(\bar{z}) = - \rhoc \frac{s^T (\bar{z} - \xlm)}{\|\bar{z} - \xlm\|} < 0.
    \end{equation*}

    \item If \( i \in \mathcal{Q}_2 \), where \( \obj{i}(\bar{z}) > \obj{i}(\xlm) \) and \( s^T \nabla \obj{i}(\bar{z}) > 0 \), then for $0 < \muc < \min \left\{1, \frac{\rhoc}{\|\bar{z} - \xlm\|} \cdot \frac{s^T (\bar{z} - \xlm)}{\max_{i \in \mathcal{Q}_2} \left( s^T \nabla \obj{i}(\bar{z}) \right)} \right\}$ and $\rhoc > 0$, it follows that:
    \begin{align*}
        s^T \nabla \gdf_{i,\muc,\rhoc,\xlm}(\bar{z}) 
        &= A'_{\muc}(\obj{i}(\bar{z}) - \obj{i}(\xlm)) \cdot d^{T} \nabla \obj{i}(\bar{z}) - \rhoc \frac{s^T (\bar{z} - \xlm)}{\|\bar{z} - \xlm\|} \\
        &\leq \muc \cdot s^T \nabla \obj{i}(\bar{z}) - \rhoc \frac{s^T (\bar{z} - \xlm)}{\|\bar{z} - \xlm\|} < 0.
    \end{align*}

    In particular, from (\ref{eq1.8}) we have \( \|\nabla \obj{i}(\bar{z})\| \leq \mathcal{M} \). Choosing \( s = \bar{z} - \xlm \), the following estimate holds:
\begin{align*}
    0 < \muc &< \min \left\{ 1, \frac{\rhoc}{\mathcal{M}} \right\} \\
    &\leq \min \left\{ 1, \frac{\rhoc}{\|\bar{z} - \xlm\|} \cdot \frac{\|\bar{z} - \xlm\|^2}{\|\bar{z} - \xlm\| \cdot \|\nabla \obj{i}(\bar{z})\|} \right\} \\
    &\leq \min \left\{ 1, \frac{\rhoc}{\|\bar{z} - \xlm\|} \cdot \frac{s^T(\bar{z} - \xlm)}{\max_{i \in \mathcal{Q}_2} \left( s^T \nabla \obj{i}(\bar{z}) \right)} \right\},
\end{align*}
which demonstrates that, under the conditions \( 0 < \muc < \min \left\{ 1, \frac{\rhoc}{\mathcal{M}} \right\} \), \( \rhoc > 0 \) and for \( s = \bar{z} - \xlm \), $s^T \nabla \gdf_{i,\muc,\rhoc,\xlm}(\bar{z})<0$.


\item If \( i \in \mathcal{Q}_3 \), where \( \obj{i}(\bar{z}) > \obj{i}(\xlm) \) and \( s^T \nabla \obj{i}(\bar{z}) < 0 \), then two cases arise:
\begin{enumerate}[label=(\roman*)]
    \item For \( c' = 0 \): If \( \rhoc > 0 \) and \( 0 < \muc < 1 \), then from Lemma~\ref{lemma3.1}, we have:
        \begin{align*}
    A'_{\muc}(\obj{i}(\bar{z}) - \obj{i}(\xlm)) 
    &= V_{\muc}(\obj{i}(\bar{z}) - \obj{i}(\xlm)) \\
   &\quad + \left[\obj{i}(\bar{z}) - \obj{i}(\xlm)\right] \cdot  V'_{\muc}(\obj{i}(\bar{z}) - \obj{i}(\xlm)) \\
     &= \muc > 0,
    \end{align*}
    and hence,
        \begin{align*}
        s^T \nabla \gdf_{i,\muc,\rhoc,\xlm}(\bar{z}) 
        &= A'_{\muc}\left( \obj{i}(\bar{z}) - \obj{i}(\xlm) \right) \cdot s^T \nabla \obj{i}(\bar{z}) - \rhoc \frac{s^T (\bar{z} - \xlm)}{\|\bar{z} - \xlm\|} < 0.
    \end{align*}

    \item For \( c' > 0 \): If \( \rhoc > 0 \) and
    \[
    0 < \muc < \min \left\{ 1, \frac{\rhoc}{c' \Lconst \|\bar{z} - \xlm\|^2} \cdot \frac{s^T (\bar{z} - \xlm)}{\max_{i \in \mathcal{Q}_3} \left( - s^T \nabla \obj{i}(\bar{z}) \right)} \right\},
    \]
    then from the definition of \( V_{\muc} \), we have \( V_{\muc}(y) \geq c\muc > 0 \) for all \( y \), and
    \[
    -c'\muc \leq V'_{\muc}(y) \leq 0, \quad \text{for all } y \geq 0.
    \]
    Since \( \obj{i}(\bar{z}) - \obj{i}(\xlm) > 0 \), and by the Lipschitz condition (see Inequality~(\ref{eq1.4})), we have:
    \[
    0 < \obj{i}(\bar{z}) - \obj{i}(\xlm) \leq \Lconst \|\bar{z} - \xlm\|,
    \]
    and therefore,
    \begin{align*}
    A'_{\muc}(\obj{i}(\bar{z}) - \obj{i}(\xlm)) 
    &= V_{\muc}(\obj{i}(\bar{z}) - \obj{i}(\xlm)) \\
   &\quad + \left[\obj{i}(\bar{z}) - \obj{i}(\xlm)\right] \cdot  V'_{\muc}(\obj{i}(\bar{z}) - \obj{i}(\xlm)) \\
    &> 0 + \Lconst \|\bar{z} - \xlm\| \cdot (-c'\muc) = -c'\muc \Lconst \|\bar{z} - \xlm\|.
    \end{align*}
    Consequently,
    \begin{align*}
    s^T \nabla \gdf_{i, \muc, \rhoc, \xlm}(\bar{z}) 
    &= A'_{\muc}(\obj{i}(\bar{z}) - \obj{i}(\xlm)) \cdot s^T \nabla \obj{i}(\bar{z}) - \rhoc \frac{s^T(\bar{z} - \xlm)}{\|\bar{z} - \xlm\|} \\
    &< -c'\muc \Lconst \|\bar{z} - \xlm\| \cdot s^T \nabla \obj{i}(\bar{z}) - \rhoc \frac{s^T(\bar{z} - \xlm)}{\|\bar{z} - \xlm\|} < 0.
    \end{align*}

    Now, from (\ref{eq1.7}), we have \( \|\bar{z} - \xlm\| \leq \Kconst \) and from (\ref{eq1.8}), \( \|\nabla \obj{i}(\bar{z})\| \leq \mathcal{M} \). If \( s = \bar{z} - \xlm \), then:
    \begin{align*}
    0 < \muc &< \min \left\{ 1, \frac{\rhoc}{c' \Lconst \Kconst \mathcal{M}} \right\} \\
    &\leq \min \left\{ 1, \frac{\rhoc}{c' \Lconst \|\bar{z} - \xlm\|^2} \cdot \frac{\|\bar{z} - \xlm\|^2}{\|\bar{z} - \xlm\| \cdot \|\nabla \obj{i}(\bar{z})\|} \right\} \\
    &\leq \min \left\{ 1, \frac{\rhoc}{c' \Lconst \|\bar{z} - \xlm\|^2} \cdot \frac{s^T(\bar{z} - \xlm)}{\max_{i \in \mathcal{Q}_3} \left( - s^T \nabla \obj{i}(\bar{z}) \right)} \right\}.
    \end{align*}
Since a similar analysis has been employed earlier, we briefly note that if \( c' \) or \( \lim_{\muc \to 0} c' \) is a positive constant, then one can select \linebreak
\( \muc \in \left(0, \min \left\{1, \frac{\rhoc}{c' \Lconst \Kconst \mathcal{M}} \right\} \right) \) for sufficiently small \( \muc \). Alternatively, if \( c' \to \infty \) as \( \muc \to 0 \), then by definition of \( c' \), \( \lim_{\muc \to 0} c'\muc = 0 \), ensuring the existence of a sufficiently small \( \muc \) such that \( 0 < c'\muc < \frac{\rhoc}{\Lconst \Kconst \mathcal{M}} \).

\end{enumerate}

\end{enumerate}

Consider {\bm{$j \in \mathcal{I}_3$}}. Since \( \obj{j}(\bar{z}) < \obj{j}(\xlm) \) holds, it follows from the definitions of \( A_{\muc} \) and \( V_{\muc} \) that:
\[
A'_{\muc} \left( \obj{i}(\bar{z}) - \obj{i}(\xlm) \right) \geq c\muc > 0.
\]
Accordingly, the following cases arise:

\begin{enumerate}[label=\textbf{Case 3\alph*.}]

\item If \( i \in \mathcal{R}_1 \), then \( s^T \nabla \obj{i}(\bar{z}) = 0 \), and for \( \rhoc > 0 \), \( 0 < \muc < 1 \), it follows that:
\begin{align*}
    s^T \nabla \gdf_{i, \muc, \rhoc, \xlm} (\bar{z}) 
    &= A'_{\muc}(\obj{i}(\bar{z}) - \obj{i}(\xlm)) \cdot s^T \nabla \obj{i}(\bar{z}) - \rhoc \frac{s^T (\bar{z} - \xlm)}{\|\bar{z} - \xlm\|} \\
    &= - \rhoc \frac{s^T (\bar{z} - \xlm)}{\|\bar{z} - \xlm\|} < 0.
\end{align*}

\item If \( i \in \mathcal{R}_2 \), then \( s^T \nabla \obj{i}(\bar{z}) > 0 \) and for
\[
0 < \muc < 1, \quad \rhoc > \max \left\{ 0, \frac{\mathcal{C} \cdot \max_{i \in \mathcal{R}_2} \left( s^T \nabla \obj{i}(\bar{z}) \right) \cdot \|\bar{z} - \xlm\|}{s^T (\bar{z} - \xlm)} \right\},
\]
it follows that:
    \begin{align*}
        s^T \nabla \gdf_{i,\muc,\rhoc,\xlm}(\bar{z}) 
        &= A'_{\muc}\left( \obj{i}(\bar{z}) - \obj{i}(\xlm) \right) \cdot s^T \nabla \obj{i}(\bar{z}) - \rhoc \frac{s^T (\bar{z} - \xlm)}{\|\bar{z} - \xlm\|}\\
         &= \mathcal{C} \cdot s^T \nabla \obj{i}(\bar{z}) - \rhoc \frac{s^T (\bar{z} - \xlm)}{\|\bar{z} - \xlm\|} < 0,
    \end{align*}
where \( C = A'_{\muc} \left( \obj{i}(\bar{z}) - \obj{i}(\xlm) \right) \).
\item

If \( i \in \mathcal{R}_3 \), then \( s^T \nabla \obj{i}(\bar{z}) < 0 \) and for any \( \rhoc > 0 \) and \( 0 < \muc < 1 \), it follows:
\begin{align*}
    s^T \nabla \gdf_{i, \mu, \rhoc, \xlm} (\bar{z}) 
    &= A'_{\muc} \left(\obj{i}(\bar{z}) - \obj{i}(\xlm)\right) \cdot s^T \nabla \obj{i}(\bar{z}) 
    - \rhoc \frac{s^T (\bar{z} - \xlm)}{\|\bar{z} - \xlm\|} < 0.
\end{align*}

\end{enumerate}

\par Therefore, when {\bm{$j \in \mathcal{I}_3$}} and for
\begin{equation*}
0 < \muc < 1 \quad \text{and} \quad \rhoc > \max \left\{ 0,\frac{\mathcal{C} \cdot \max_{j \in \mathcal{I}_3} \left(s^T \nabla \obj{j}(\bar{z})\right) \cdot \|\bar{z} - \xlm\|}{s^T (\bar{z} - \xlm)} \right\},    
\end{equation*}
it follows that:
\begin{equation*}
    s^T \nabla \gdf_{j,\muc,\rhoc,\xlm}(\bar{z}) < 0.
\end{equation*}
\par Accordingly, the theorem guarantees that, with suitable choices of \( \muc \) and \( \rhoc \), the vector \( s \) serves as a valid descent direction for the function \( \gdf_{\muc, \rhoc, \xlm} \) evaluated at \( \bar{z} \).

\end{proof}


\begin{theorem}\label{theorem_3.5}
If
\[
\rhoc > \max \left\{ 0,\frac{\mathcal{C} \cdot \max_{j \in \mathcal{I}_3} \left(s^T \nabla \obj{j}(z)\right) \cdot \|z - \xlm\|}{s^T (x - \xlm)} \right\},
\]
where \( \mathcal{C} = A'_{\muc} \left( \obj{i}(z) - \obj{i}(\xlm) \right) \), then the function \( \gdf_{\muc, \rhoc, \xlm} \) admits no stationary point in the set \( \hat{\Feas}(\xlm) \), provided that \( \muc \) satisfies the following conditions, depending on the value of \( c' \):
\begin{itemize}
    \item if \( c' = 0 \), then
    \[
    0 < \muc < \min \left\{
        1,\,
        \frac{\rhoc}{\mathcal{M}}
    \right\};
    \]

    \item if \( c' > 0 \), then
    \[
    0 < \muc < \min \left\{
        1,\,
        \frac{\rhoc}{\mathcal{M}},\, \frac{\rhoc}{c' \Lconst \Kconst \mathcal{M}}
    \right\}.
    \]
\end{itemize}
\end{theorem}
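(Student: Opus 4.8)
The plan is to reduce this statement to the descent-direction results already in hand, Theorem~\ref{Lemma3.3} and Theorem~\ref{Lemma3.4}, and then combine them with the first-order characterisation of stationary (Fritz John) points of a vector map. Fix an arbitrary $\bar z \in \hat{\Feas}(\xlm)$; by \eqref{eq_mop_basin} this means $\bar z \in \mathrm{int}(\Feas)$, $\bar z \neq \xlm$, and $\obj{j}(\bar z) \geq \obj{j}(\xlm)$ for at least one index $j$. I would partition $\Idx{m}$ into $\mathcal{I}_1 = \{ j : \obj{j}(\bar z) = \obj{j}(\xlm)\}$, $\mathcal{I}_2 = \{ j : \obj{j}(\bar z) > \obj{j}(\xlm)\}$, $\mathcal{I}_3 = \{ j : \obj{j}(\bar z) < \obj{j}(\xlm)\}$, so that $\mathcal{I}_1 \cup \mathcal{I}_2 \neq \emptyset$, and take the particular feasible direction $s = \bar z - \xlm$ (feasible because $\bar z \in \mathrm{int}(\Feas)$), for which $s^T(\bar z - \xlm) = \|\bar z - \xlm\|^2 > 0$. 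The sub-partitions $\mathcal{P}_\bullet, \mathcal{Q}_\bullet, \mathcal{R}_\bullet$ of $\mathcal{I}_1, \mathcal{I}_2, \mathcal{I}_3$ are then simply read off from the sign of $s^T \nabla \obj{i}(\bar z)$, exactly as in the hypotheses of those two theorems.

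Next I would split into the two exhaustive cases on the structure of $\bar z$ relative to $\fvec$. If $\bar z$ is a critical point of $\fvec$, apply Theorem~\ref{Lemma3.3}; if $\bar z$ is not critical for $\fvec$, apply Theorem~\ref{Lemma3.4}. In either case the conclusion is that, with the present choice $s = \bar z - \xlm$, $s$ is a descent direction of $\gdf_{\muc,\rhoc,\xlm}$ at $\bar z$, i.e.\ $s^T \nabla \gdf_{j,\muc,\rhoc,\xlm}(\bar z) < 0$ for every $j \in \Idx{m}$. The one thing that must be checked is that the \emph{uniform} bounds on $\rhoc$ and $\muc$ stated in the present theorem — the ones involving $\mathcal{M}$ and $\Kconst$ — imply, for $s = \bar z - \xlm$, the \emph{pointwise} bounds appearing in Theorems~\ref{Lemma3.3} and~\ref{Lemma3.4}. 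This is exactly the chain of estimates carried out inside the proofs of those theorems: using \eqref{eq1.8} and Cauchy--Schwarz, $\frac{\rhoc}{\mathcal{M}} \leq \frac{\rhoc}{\|\bar z - \xlm\|}\cdot\frac{s^T(\bar z - \xlm)}{\max_i s^T\nabla\obj{i}(\bar z)}$, and using \eqref{eq1.7}--\eqref{eq1.8}, $\frac{\rhoc}{c'\Lconst\Kconst\mathcal{M}} \leq \frac{\rhoc}{c'\Lconst\|\bar z - \xlm\|^2}\cdot\frac{s^T(\bar z - \xlm)}{\max_i(-s^T\nabla\obj{i}(\bar z))}$; so it suffices to cite these. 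For $c'>0$ one additionally uses $\lim_{\muc\to 0}\muc c' = 0$ to ensure that the admissible interval for $\muc$ is nonempty, whether $c'$ stays bounded or blows up as $\muc\to 0$.

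Finally I would derive the contradiction. Suppose, for contradiction, that $\bar z$ were a stationary (Fritz John) point of $\gdf_{\muc,\rhoc,\xlm}$ in $\hat{\Feas}(\xlm)$. By the first-order necessary condition (Theorem~\ref{theorem1.1} applied to the vector map $\gdf_{\muc,\rhoc,\xlm}$) there exists $\bar\eta \in \Rplus{m}\setminus\{0^m\}$ with $\sum_{j\in\Idx{m}} \bar\eta_j \nabla\gdf_{j,\muc,\rhoc,\xlm}(\bar z) = 0$. Taking the inner product of this identity with $s = \bar z - \xlm$ gives $0 = \sum_{j\in\Idx{m}} \bar\eta_j\, s^T\nabla\gdf_{j,\muc,\rhoc,\xlm}(\bar z)$; but the right-hand side is a nonnegative combination, with not all weights zero, of the strictly negative numbers $s^T\nabla\gdf_{j,\muc,\rhoc,\xlm}(\bar z)$, hence is strictly negative — a contradiction. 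Since $\bar z \in \hat{\Feas}(\xlm)$ was arbitrary, $\gdf_{\muc,\rhoc,\xlm}$ has no stationary point in $\hat{\Feas}(\xlm)$.

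The main obstacle — indeed the only genuinely technical point — is the bookkeeping in the second paragraph: verifying that every pointwise bound required by the partition-by-partition arguments of Theorems~\ref{Lemma3.3} and~\ref{Lemma3.4} is dominated by the corresponding uniform $\mathcal{M}$/$\Kconst$ bound when $s=\bar z-\xlm$, and that the $c'=0$ versus $c'>0$ dichotomy is handled consistently. Everything else is a routine combination of the already-established descent facts with a Gordan-type alternative for the Fritz John system.
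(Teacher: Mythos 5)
Your proposal is correct and follows essentially the same route as the paper: the paper's proof likewise sets $s = z - \xlm$ for an arbitrary $z \in \hat{\Feas}(\xlm)$ and invokes Theorems~\ref{Lemma3.3} and~\ref{Lemma3.4} to conclude that $s$ is a descent direction, hence $z$ cannot be stationary. The details you add --- the explicit critical/non-critical case split, the verification that the uniform $\mathcal{M}$/$\Kconst$ bounds dominate the pointwise ones, and the Fritz John contradiction --- are precisely the steps the paper leaves implicit.
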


\begin{proof}
Let \( z \in \hat{\Feas}(\xlm) \) and define \( s = z - \xlm \). By Theorems~\ref{Lemma3.3} and~\ref{Lemma3.4}, it follows that under the stated bounds on \( \rhoc \) and \( \muc \), the direction \( s \) is a valid descent direction for \( \gdf_{\muc, \rhoc, \xlm} \) at \( z \). Therefore, \( z \) cannot be a stationary point. Since this holds for any \( z \in \hat{\Feas}(\xlm) \), it follows that \( \gdf_{\muc, \rhoc, \xlm} \) admits no stationary point in \( \hat{\Feas}(\xlm) \).
\end{proof}

\begin{theorem}\label{theorem_3.6}
Let \( z^{\dagger\dagger} \in \operatorname{int}(\Feas) \) be a local weak efficient solution of \( \fvec \) with \( \obj{j}(z^{\dagger\dagger}) < \obj{j}(\xlm) \), $j \in \Idx{m}$. If \( \rhoc > 0 \) is sufficiently small and \( 0 < \muc < 1 \), then the function \( \gdf_{\muc, \rhoc, \xlm} \) has a weak efficient solution \( z' \in \Feas \) such that:
\[
z' \in N(z^{\dagger\dagger},\epsilon) \subset \Feas \quad \text{and} \quad \obj{j}(z') < \obj{j}(\xlm) - \tau,
\]
where \( N(z^{\dagger\dagger},\epsilon) \) is an \( \epsilon \)-neighborhood of \( z^{\dagger\dagger} \) with \( \obj{j}(z) < \obj{j}(\xlm) \) for all \( z \in N(z^{\dagger\dagger},\epsilon) \).
\end{theorem}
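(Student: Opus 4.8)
The plan is to realize condition~(C3) for $\gdf_{\muc,\rhoc,\xlm}$, in the sharpened form stated, by a compactness-plus-perturbation argument on a ball about $z^{\dagger\dagger}$, taking $\rhoc$ small enough that the perturbed weak efficient solution stays interior to that ball. As a first step I would convert the hypothesis $\obj{j}(z^{\dagger\dagger})<\obj{j}(\xlm)$ into a quantitative gap: since $z^{\dagger\dagger}$ and $\xlm$ are local weak efficient solutions, $\fvec(z^{\dagger\dagger})$ and $\fvec(\xlm)$ are distinct elements of the finite set in Assumption~\ref{assumption3}, so the defining property of $\tau$ in~\eqref{eq2.2} gives $\obj{j}(z^{\dagger\dagger})<\obj{j}(\xlm)-\tau$ for every $j\in\Idx{m}$. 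Using continuity of the $\obj{j}$, the inclusion $z^{\dagger\dagger}\in\operatorname{int}(\Feas)$, and the local weak efficiency of $z^{\dagger\dagger}$, I fix $\epsilon>0$ so small that $\bar B:=\overline{N(z^{\dagger\dagger},\epsilon)}\subset\operatorname{int}(\Feas)$, that $\obj{j}(z)<\obj{j}(\xlm)-\tau$ for all $z\in\bar B$ and all $j$ (which already supplies the stated property of $N(z^{\dagger\dagger},\epsilon)$ and the conclusion $\obj{j}(z')<\obj{j}(\xlm)-\tau$), and that $z^{\dagger\dagger}$ is a weak efficient solution of $\fvec$ over $\bar B$.

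Next I would pass to an order-preserving reformulation on $\bar B$. There every argument of $A_\muc$ lies in $(-\infty,-\tau]$, where $A_\muc$ is strictly increasing by Property~\ref{property3_lemma3.1} of Lemma~\ref{lemma3.1}; hence $z\mapsto\bigl(A_\muc(\obj{j}(z)-\obj{j}(\xlm))\bigr)_{j\in\Idx{m}}$ has the same weak efficiency structure on $\bar B$ as $\fvec$, so $z^{\dagger\dagger}$ minimizes over $\bar B$ the function
\[
\zeta_0(z):=\max_{j\in\Idx{m}}\Bigl[A_\muc\bigl(\obj{j}(z)-\obj{j}(\xlm)\bigr)-A_\muc\bigl(\obj{j}(z^{\dagger\dagger})-\obj{j}(\xlm)\bigr)\Bigr],
\]
with minimum value $0$. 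For $\rhoc\ge0$ set $\zeta_\rhoc(z):=\max_{j\in\Idx{m}}\bigl[\gdf_{j,\muc,\rhoc,\xlm}(z)-\gdf_{j,\muc,\rhoc,\xlm}(z^{\dagger\dagger})\bigr]=\zeta_0(z)-\rhoc\bigl(\|z-\xlm\|-\|z^{\dagger\dagger}-\xlm\|\bigr)$; this is continuous, so it attains its minimum on the compact set $\bar B$ at some $z'_\rhoc$, and the observation that $\gdf_{j,\muc,\rhoc,\xlm}(w)<\gdf_{j,\muc,\rhoc,\xlm}(z'_\rhoc)$ for all $j$ would force $\zeta_\rhoc(w)<\zeta_\rhoc(z'_\rhoc)$ shows that $z'_\rhoc$ is a weak efficient solution of $\gdf_{\muc,\rhoc,\xlm}$ over $\bar B$.

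The step I expect to be the real obstacle is showing that $z'_\rhoc$ is interior to $\bar B$ for all sufficiently small $\rhoc$. Since $\bigl|\,\|z-\xlm\|-\|z^{\dagger\dagger}-\xlm\|\,\bigr|\le\|z-z^{\dagger\dagger}\|\le\epsilon$ on $\bar B$, we have $\sup_{\bar B}|\zeta_\rhoc-\zeta_0|\le\rhoc\epsilon$, so $\min_{\bar B}\zeta_\rhoc\to0$ and every cluster point of $z'_\rhoc$ as $\rhoc\downarrow0$ minimizes $\zeta_0$ over $\bar B$, that is, it lies in $\{z\in\bar B:\fvec(z)\le\fvec(z^{\dagger\dagger})\}$. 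To force these cluster points off $\partial\bar B$ I would further shrink $\epsilon$ so that $z^{\dagger\dagger}$ is the \emph{only} point of $\bar B$ with $\fvec\le\fvec(z^{\dagger\dagger})$ — relying on, or if necessary supplementing, the isolation of the value $\fvec(z^{\dagger\dagger})$ built into Assumption~\ref{assumption3}, this being exactly the point where degenerate (e.g.\ locally flat) Pareto pieces must be excluded. Then $z'_\rhoc\to z^{\dagger\dagger}$, so $z'_\rhoc\in\operatorname{int}(\bar B)=N(z^{\dagger\dagger},\epsilon)$ for all $\rhoc$ in some interval $(0,\rhoc_0)$. (An alternative I would try if the uniqueness cannot be arranged: run the argument on the connected component containing $z^{\dagger\dagger}$ of the open sublevel set $\{z\in\operatorname{int}\Feas:\obj{j}(z)<\obj{j}(\xlm)-\tau\ \forall j\}$, which is bounded because $\Feas$ is compact, and show directly that on its boundary — where some $\obj{j}$ equals $\obj{j}(\xlm)-\tau$ — a short inward step strictly lowers every component of $\gdf_{\muc,\rhoc,\xlm}$ once $\rhoc$ is small, since there $A'_\muc\ge1$ dominates the $\rhoc$-term; hence no weak efficient solution of $\gdf_{\muc,\rhoc,\xlm}$ sits on that boundary.)

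Finally, fixing $\rhoc\in(0,\rhoc_0)$ and putting $z':=z'_\rhoc$, the point $z'$ is a weak efficient solution of $\gdf_{\muc,\rhoc,\xlm}$ over $\bar B$ and interior to $\bar B$, hence a local weak efficient solution of $\gdf_{\muc,\rhoc,\xlm}$ over $\Feas$; moreover $z'\in\bar B$ yields $\obj{j}(z')<\obj{j}(\xlm)-\tau$ for all $j$, and $\obj{j}(z)<\obj{j}(\xlm)$ holds for all $z\in N(z^{\dagger\dagger},\epsilon)$, which is the assertion. Everything outside the interiority step is routine compactness together with the order-preservation property of $A_\muc$ from Lemma~\ref{lemma3.1}.
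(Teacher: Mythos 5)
Your overall skeleton matches the paper's: both arguments first upgrade $\obj{j}(z^{\dagger\dagger})<\obj{j}(\xlm)$ to $\obj{j}(z^{\dagger\dagger})<\obj{j}(\xlm)-\tau$ via \eqref{eq2.2} and Assumption~\ref{assumption3}, restrict attention to a compact neighborhood of $z^{\dagger\dagger}$ on which $\obj{j}<\obj{j}(\xlm)$, and then try to trap a minimizer of $\gdf_{\muc,\rhoc,\xlm}$ over that compact set away from its boundary so that it is a genuine local weak efficient solution. The divergence --- and the gap --- is in how the trapping is achieved. Your main line sends $\rhoc\downarrow 0$ and needs the minimizers $z'_\rhoc$ to accumulate at $z^{\dagger\dagger}$, which you try to secure by making $z^{\dagger\dagger}$ the \emph{unique} minimizer of $\zeta_0$ on your ball $\bar B$. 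That uniqueness is not available: Assumption~\ref{assumption3} constrains only the set of \emph{values} attained at local weak efficient solutions, and local weak efficiency of $z^{\dagger\dagger}$ does not isolate it within $\{z:\fvec(z)\le\fvec(z^{\dagger\dagger})\}$. For instance, with $\obj{1}(z)=\obj{2}(z)=z_1^2$ near $z^{\dagger\dagger}=0$, that set is a full line segment meeting $\partial\bar B$ for every $\epsilon$, and since your perturbation $-\rhoc\,\|z-\xlm\|$ rewards distance from $\xlm$, the minimizer of $\zeta_\rhoc$ sits on $\partial\bar B$ for \emph{every} $\rhoc>0$. So the step you yourself flagged as the obstacle does fail, and the limiting argument cannot repair it.

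The paper's mechanism is the ingredient you are missing. It inserts a margin $\delta>0$ with $\obj{j}(z^{\dagger\dagger})+\delta\le\obj{j}(\xlm)-\tau$, works on the sublevel set $L=\{z\in N(z^{\dagger\dagger},\epsilon):\obj{j}(z)\le\obj{j}(z^{\dagger\dagger})+\delta\}$ with value-boundary $B=\{z\in N(z^{\dagger\dagger},\epsilon):\obj{j}(z)=\obj{j}(z^{\dagger\dagger})+\delta\}$ (not your ball), and proves the componentwise domination $\gdf_{j,\muc,\rhoc,\xlm}(z^{\dagger\dagger})<\gdf_{j,\muc,\rhoc,\xlm}(\bar z)$ for all $\bar z\in B$ and all $j$: Property~\ref{property3_lemma3.1} of Lemma~\ref{lemma3.1} (superlinear increments of $A_\muc$ below $-\tau$) yields
\[
A_\muc\bigl(\obj{j}(\bar z)-\obj{j}(\xlm)\bigr)-A_\muc\bigl(\obj{j}(z^{\dagger\dagger})-\obj{j}(\xlm)\bigr)>\delta ,
\]
and the diameter bound \eqref{eq1.7} then shows the distance term cannot cancel this gain once $0<\rhoc<\delta/\Kconst$ --- an explicit threshold rather than an unquantified ``sufficiently small''. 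Hence no minimizer over $L$ can lie on $B$, which is how interiority (in the value sense) is obtained without any uniqueness. Your fallback sketch (an inward step from the boundary of the $\tau$-sublevel set) is in this spirit, but its key claim --- that one direction strictly decreases \emph{every} component of $\gdf_{\muc,\rhoc,\xlm}$ there --- is exactly what requires proof, and it is what Property~\ref{property3_lemma3.1} of Lemma~\ref{lemma3.1} together with the paper's two-case comparison of $\|\bar z-\xlm\|$ against $\|z^{\dagger\dagger}-\xlm\|$ supplies. To repair your proof, replace the limiting/uniqueness argument by this componentwise domination of the value-boundary by $z^{\dagger\dagger}$.
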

\begin{proof}
Since \( \Feas \) is compact (Assumption \ref{assumption1}), connected,  \( \obj{j} \) are continuous over \( \Feas \) (Assumption \ref{assumption2}), and \( \obj{j}(z^{\dagger\dagger}) < \obj{j}(\xlm) \), for sufficiently small \( \epsilon > 0 \), there exists a neighborhood \( N(z^{\dagger\dagger},\epsilon) \subset \Feas \) such that:
\[
\obj{j}(z^{\dagger\dagger}) \leq \obj{j}(z) < \obj{j}(\xlm), \quad \forall z \in N(z^{\dagger\dagger},\epsilon).
\]
From (\ref{eq2.2}), $\obj{j}(z^{\dagger\dagger})  < \obj{j}(\xlm)$ implies:
\begin{equation*}
    \obj{j}(z^{\dagger\dagger}) < \obj{j}(\xlm) - \tau.
\end{equation*}
If \( \delta > 0 \) is sufficiently small, then:
\begin{equation}\label{eq10}
    \obj{j}(z^{\dagger\dagger}) + \delta \leq \obj{j}(\xlm) - \tau.
\end{equation}
Define:
\begin{align}
L &= \{ z \in N(z^{\dagger\dagger},\epsilon) : \obj{j}(z) \leq \obj{j} (z^{\dagger\dagger}) + \delta \}, \label{eq11}\\
B &= \{ z \in N(z^{\dagger\dagger},\epsilon) : \obj{j}(z) = \obj{j}(z^{\dagger\dagger}) + \delta \}.\label{eq12}
\end{align}
Both \( L \) and \( B \) are compact subsets of \( \Feas \).
\par We now show that for $\rhoc > 0$ sufficiently small and $0< \muc<1$, the following inequality holds:
\begin{equation} \label{eq13}
    \gdf_{j,\muc, \rhoc, \xlm}(z^{\dagger\dagger}) <\gdf_{j,\muc, \rhoc, \xlm}(\bar{z}), \quad \forall \bar{z} \in B.
\end{equation}
For any \( \bar{z} \in B \) and using (\ref{eq10}) and (\ref{eq12}), we have:
\begin{equation*}
    \obj{j}(z^{\dagger\dagger}) - \obj{j}(\xlm) <  \obj{j}(z^{\dagger\dagger}) + \delta - \obj{j}(\xlm) = \obj{j}(\bar{z})-\obj{j}(\xlm) \leq -\tau.
\end{equation*}
So, using the Property \ref{property3_lemma3.1} of Lemma \ref{lemma3.1}, we get:
\begin{equation}\label{eq14}
 0 < \delta < A_{\muc}(\obj{j}(\bar{z}) - \obj{j}(\xlm)) - A_{\muc}(\obj{j}(z^{\dagger\dagger}) - \obj{j}(\xlm)).
\end{equation}
Now consider the following two cases:
\begin{enumerate}[label=\textbf{Case \arabic*.}]
\item \( \|\bar{z} - \xlm\| \leq \|z^{\dagger\dagger} - \xlm\| \).\\
Then for $\rhoc > 0$, $0< \muc<1$ and using Inequality (\ref{eq14}), we have:
    \begin{align*}
        \gdf_{j,\muc, \rhoc, \xlm}(z^{\dagger\dagger}) 
        &= A_{\muc}(\obj{j}(z^{\dagger\dagger}) - \obj{j}(\xlm)) - \rhoc \|z^{\dagger\dagger} - \xlm\|  \\
        &< A_{\muc}(\obj{j}(\bar{z}) - \obj{j}(\xlm)) - \rhoc \|\bar{z} - \xlm\| = \gdf_{j,\muc, \rhoc, \xlm}(\bar{z}).
    \end{align*}
\item \( \|z - \xlm\| > \|z^{\dagger\dagger} - \xlm\| \).\\  
By using (\ref{eq1.7}) and (\ref{eq14}), we have:
\[
\frac{A_{\muc}(\obj{j}(\bar{z}) - \obj{j}(\xlm)) - A_{\muc}(\obj{j}(z^{\dagger\dagger}) - \obj{j}(\xlm))}{\|x - \xlm\| - \|z^{\dagger\dagger} - \xlm\|}
> \frac{\delta}{\|\bar{z}- \xlm\|} \geq \frac{\delta}{\Kconst}.
\]
Thus, for \( 0< \rhoc < \frac{\delta}{\Kconst}\) and  \( 0< \muc < 1\), the following inequality is satisfied:
\begin{equation*}
  \gdf_{j,\muc, \rhoc, \xlm}(z^{\dagger\dagger}) < \gdf_{j,\muc, \rhoc, \xlm}(\bar{z}). 
\end{equation*}
So, in summary, if \( 0< \rhoc < \frac{\delta}{\Kconst}\) and  \( 0< \muc < 1\), then (\ref{eq13}) holds.
\end{enumerate}
Let,
\begin{equation*}
 z' = \arg\min_{z \in L} \gdf_{\muc, \rhoc, \xlm}(z).  
\end{equation*}
If \( 0< \rhoc < \frac{\delta}{\Kconst}\) and  \( 0< \muc < 1\), then by Inequality (\ref{eq13}), we have:
\begin{equation*}
  z' = \arg\min_{z \in L\setminus B} \gdf_{\muc, \rhoc, \xlm}(z). 
\end{equation*}
Hence, by using (\ref{eq10})-(\ref{eq12}), we have:
\begin{equation*}
    \obj{j}(z') < \obj{j}(z^{\dagger\dagger}) + \delta \leq \obj{j}(\xlm) - \tau.
\end{equation*}
Now, since \( N(z^{\dagger\dagger},\epsilon) \)  is an open set and \( L \setminus B \) is bounded, then by the continuity of \( \gdf_{j, \muc, \rhoc, \xlm} \), there exists \( 0 < \varepsilon_1 < \varepsilon \) such that whenever \( 0 < \varepsilon' \leq \varepsilon_1 \), we have:
\begin{equation*}
N(\varepsilon',z') \subset LE \setminus EQ \quad \text{and} \quad
\gdf_{j,\muc, \rhoc, \xlm}(z') \leq\gdf_{j,\muc, \rhoc, \xlm}(z) \quad \text{for all } z \in N(\varepsilon',z'),  
\end{equation*}
where \( N(\varepsilon',z') \) is an \( \varepsilon' \)-neighborhood of \( z' \).\\
Therefore, \( z' \) is a weak efficient solution of \( \gdf_{\muc, \rhoc, \xlm} \) over \( \Feas \).
\end{proof}

\subsection{Algorithm}
\label{sec_alg}
This subsection presents an algorithm developed based on the theoretical results established in the preceding sections. The algorithm's objective is to compute globally efficient solutions for non-convex multi-objective optimization problems. Unlike single-objective problems, the solution to a multi-objective problem is not a unique minimizer but a set of efficient (Pareto-optimal) solutions. To ensure a well-distributed approximation of this set, we utilize the initial point generation strategy proposed in~\cite{ansary2021sqcqp}, specifically Steps 1-4 of Algorithm 6.1. These concepts are consolidated into the algorithm described below. In this context, $(\mathrm{PF})$ denotes the Pareto front of a multi-objective optimization problem before applying the multi-objective global descent method, while $(\mathrm{PFG})$ denotes the corresponding Pareto front obtained after applying the global descent algorithm. Similarly, $(\mathrm{WPF})$ and $(\mathrm{WPFG})$ refer to the weak Pareto fronts before and after applying the global descent method, respectively.
\begin{algorithm}[H]
\caption{({\it Global descent method for multi-objective optimization models})} \label{alg1}
\vspace{0.5mm}
\begin{algorithmic}
\State \textbf{Phase 0: Initialization}
\begin{enumerate}[label=(\alph*)]
 \item Define the auxiliary function \( V_{\muc} \) satisfying conditions (V1) and (V2).
        \item Select a nonempty subset \( \Feas^0 \subset  \Feas\) consisting of \( N \) initial points.
        \item Use Steps~1--4 of Algorithm~6.1 from~\cite{ansary2021sqcqp} to modify \( N \) initial points that satisfy the condition given in inequality~(5.1) of the same reference. The single-objective global descent method is applied for each objective \( j \in \Idx{m} \) to compute the corresponding ideal and nadir vectors.

        \item Choose initial parameters: \( \rhoc_{\text{ini}} > 0 \), \( 0 < \muc_{\text{ini}} < 1 \), \( \rhoc_L > 0 \), \( \rhoc_U > 0 \), \( 0 < \hat{\rhoc} < 1 \), \( 0 < \hat{\muc} < 1 \), \( \epsilon > 0 \), \( \kappa > 0 \), and \( \bar{\alpha}_U > 0 \).
        \item Set $WPF = WPFG = PF = PFG = \emptyset$. 
        \item Set \( \rhoc := \rhoc_{\text{ini}} \), \( \muc := \muc_{\text{ini}} \).
\end{enumerate}
\State \textbf{Phase 1: Local search}
\begin{enumerate}[label=(\alph*)]
    \item Starting at \( z_{\text{cur}} \), use a suitable local optimization method to compute a local weak efficient solution \( z^{\dagger\dagger} \) of \( \fvec \) over \( \Feas \).
    \item Update $WPF=WPF \cup \{\fvec(\xlm)\}$.
    \item Proceed to Phase 2 (Global Search).
\end{enumerate}
\State \textbf{Phase 2: Global search}
\begin{enumerate}[label=(\alph*)]
    \item Generate \( m \) initial points \( \{z_{\text{ini}}^{(i)} \in \Feas \setminus N(\xlm,\epsilon)\}_{i=1}^m \); set \( i := 1 \).\label{step_2a}
    \item Set \( z_{\text{cur}} := z_{\text{ini}}^{(i)} \).\label{step_2b}
    \item Update $\rhoc_L$.
    \item If \( \fvec(z_{\text{cur}}) < \fvec(\xlm) \), then go to Phase 1. \label{step_2c}
    \item If 
    \begin{align*}
    \|\nabla \gdf_{j,\muc, \rhoc, \xlm}(z_{\text{cur}})\| &< \kappa, \quad \text{for at least one } j \in \Idx{m}, \quad \text{or} \\
    (z_{\text{cur}} - \xlm)^T \nabla \gdf_{j,\muc, \rhoc, \xlm}(z_{\text{cur}}) &\geq 0, \quad \text{for at least one } j \in \Idx{m},
    \end{align*}
    then choose \( l \in \mathbb{N} \) such that:
    \[
    \muc_l := \hat{\muc}^l \muc, \quad \rhoc_l := \hat{\rhoc}^l \rhoc,
    \]
    and for all \( j \in \Idx{m} \),
    \[
    \|\nabla \gdf_{j,\muc_l, \rhoc_l, \xlm}(z_{\text{cur}})\| > \kappa, \quad
    (z_{\text{cur}} - \xlm)^T \nabla \gdf_{j,\muc_l, \rhoc_l, \xlm}(z_{\text{cur}}) < 0.
    \]
    Update \( \muc := \muc_l \), \( \rhoc := \rhoc_l \).\label{step_2d}

    \item Determine a descent direction \( D \) for \( \gdf_{\muc, \rhoc, \xlm} \) at \( z_{\text{cur}} \). Perform a line search along \( D \) with step size \( \bar{\alpha} \leq \bar{\alpha}_U \) to obtain a new point \( x \) satisfying descent, and set \( z_{\text{cur}} := z \); go to Step 2\ref{step_2c}.
    \item If \( z \) reaches the boundary of \( \Feas \), skip to the next iteration.
    \item Set \( i := i + 1 \). If \( i \leq m \), return to Step 2\ref{step_2b}.
    \item Otherwise, reduce \( \rhoc := \hat{\rhoc} \rhoc \), reset \( \muc := \muc_{\text{ini}} \). If \( \rhoc \geq \rhoc_L \), return to Step 2\ref{step_2a}; else terminate the algorithm and report \( \xlm \) as the best global weak efficient solution found. Update $WPFG=WPFG \cup \{\fvec(\xlm)\}$.
    \item Construct the Pareto front \( PF \) by filtering out dominated solutions from the set \( WPF \).
    \item Construct the Pareto front \( PFG \) by eliminating dominated solutions from the set \( WPFG \).
\end{enumerate}
\end{algorithmic}
\end{algorithm}

\section{Numerical illustrations}\label{sec_exe}
This section describes the numerical illustrations of the proposed global descent method for $(\mathrm{MOP})$. The proposed method is verified and compared with the multi-objective SQCQP method in \cite{ansary2021sqcqp}. A MATLAB (R2024b) code is developed for Algorithm \ref{alg1} ($MOGDM$) with the following structured process that helps us to leverage function transformation techniques.\\
\begin{itemize}
    \item \textit{Choosing \( V_{\muc} \):} The auxiliary function \( V_{\muc} \), which satisfies conditions \textnormal{(V1)} and \textnormal{(V2)}, is taken as:
    \begin{equation}
        v_{\muc}(z) = \muc \left[ (1 - c) \left( \frac{\muc(1 - c)}{1 - c\muc} \right)^{z/\tau} + c \right],
    \end{equation}
where \( 0 < c < 1 \).
\par In our implementations, we set \( c = 0.5 \) and \( \tau = 1 \).
    \item \textit{Initial point generation:}A total of 200 uniformly distributed random points are generated within the decision space \( [\text{lb}, \text{ub}] \subset \Feas \subset \mathbb{R}^n \) to initialize the optimization process.
    \item \textit{Ideal and nadir estimation:} The payoff table method~\cite{Miettinen1998-to} is employed to estimate the ideal and nadir vectors. For each objective \( \obj{j} \), the single-objective global descent method~\cite{duanli2010global} is applied using \texttt{fmincon} from three initial points: \( \text{lb} \), \( (\text{lb} + \text{ub})/2 \), and \( \text{ub} \). non-dominated solutions are used to approximate \( \fvec^I \) and \( \fvec^N \).
    \item \textit{Spread:} For generating a well-distributed Pareto front, Steps~1--4 of Algorithm~6.1 from~\cite{ansary2021sqcqp} are employed to refine the initial points such that they satisfy the condition specified in inequality~(5.1) of the same reference. Here, the subproblem \( P_{\text{sp}}(z^0) \), as formulated in Section~5 of~\cite{ansary2021sqcqp}, is solved using MATLAB’s \texttt{quadprog} function.
    \item \textit{Parameter Initialization:} 
Throughout the tests, the algorithm parameters are initialized as follows: \( \muc_{\text{ini}} = 0.5 \), \( \rhoc_{\text{ini}} = \delta / (K + 10^{-3}) \), \( \rhoc_U = \delta / K \), \( \rhoc_L = 10^{-5} \), \( \hat{\rhoc} = 0.35 \), \( \hat{\muc} = 0.1 \) and \( \kappa = 10^{-4} \sqrt{n} \). The parameter \( \epsilon \) is set to \( 1 \) if \( \min(\text{ub}_i - \text{lb}_i) > 10 \) and \( 0.1 \) otherwise. We also set \( \mult_U = \epsilon \). Here, \( K = \| \text{ub} - \text{lb} \| \) and \( \delta = 0.01 \). In Step~2\ref{step_2d}, we use \( l = 1 \).

    \item \textit{Local search:} {A penalty-based sequential quadratic constrained quadratic programming (SQCQP)} approach is used via the \texttt{MOSQCQP} routine~\cite{ansary2021sqcqp}, which internally employs \texttt{fmincon} to identify locally weak efficient solutions $\xlm$.
    \item \textit{Global search:}
    \begin{itemize}
        \item A set of \( m= 2n \) candidate points is selected to minimize the function \( \gdf_{\mu, \rhoc, \xlm} \), either by distributing them symmetrically around the reference point \( \xlm \) or by sampling randomly from the feasible region \( \Feas \). To avoid repeated convergence to the same local optimum, a deleted-\( \epsilon \) neighborhood is constructed such that all new candidate points lie outside a ball of radius \( \epsilon \) centered at \( \xlm \), ensuring the search proceeds in unexplored regions of the feasible space.

        \item During the global refinement phase, the lower bound \( \rhoc_L \) for the penalty parameter \( \rhoc \) is adaptively adjusted based on curvature and sensitivity characteristics. For each index \( j \in \mathcal{I}_3 \) satisfying \( \obj{j}(z_{\text{cur}}) < \obj{j}(\xlm) \), the update rule for \( \rhoc_L \) is given by:

where \( \mathcal{C} = A'_{\muc} \left( \obj{i}(z_{\text{cur}}) - \obj{i}(\xlm) \right) \). If the resulting value is smaller than a predefined threshold \( \rhoc_U \), it is adopted as the new bound; otherwise, the lower bound is reset to the nominal value of \( 10^{-5} \).
        \item Phase~2\ref{step_2c} corresponds to the case where the current iterate \( z_{\text{cur}} \) yields a value of $\fvec$ with \( \fvec(z_{\text{cur}}) < \fvec(\xlm) \). This indicates that \( z_{\text{cur}} \) lies within a more promising region of the objective space compared to the basin associated with \( \xlm \). Consequently, a local search procedure can be initiated from \( z_{\text{cur}} \) to further reduce the objective values and potentially identify a locally efficient solution superior to \( \xlm \).
        \item According to Theorem~\ref{theorem_3.5}, if \( \fvec(z_{\text{cur}}) \geq \fvec(\xlm) \) and the parameter \( \muc \) is sufficiently small, then the point \( z_{\text{cur}} \) does not correspond to a stationary point of the augmented descent function \( \gdf_{\muc, \rhoc, \xlm} \). In this case, the vector \( (z_{\text{cur}} - \xlm) \) constitutes a valid descent direction for \( \gdf_{\muc, \rhoc, \xlm} \) evaluated at \( z_{\text{cur}} \). To ensure progress, Phase~2\ref{step_2d} iteratively reduces the parameters \( \muc \) and \( \rhoc \) by fixed scaling factors \( \hat{\mu} \) and \( \hat{\rhoc} \), respectively, until the following descent conditions are simultaneously satisfied:
\[
\nabla \gdf_{j,\muc, \rhoc, \xlm}(z_{\text{cur}}) \not \to 0 \quad \text{and} \quad (z_{\text{cur}} - \xlm)^T \nabla \gdf_{j,\muc, \rhoc, \xlm}(z_{\text{cur}}) < 0.
\]
   \item Step sizes are computed via backtracking line search using parameters \( \bar{\alpha}_U = 0.1 \), \( \beta = 10^{-4} \), and contraction ratio \( r = 0.5 \), ensuring both descent and feasibility.
    \item The refinement phase terminates when \( \rhoc < \rhoc_L = 10^{-5} \) or when no valid descent direction exists.
    \end{itemize}

    \item \textit{Pareto front construction:} 
    \begin{itemize}
        \item The Pareto front \( PF \) is constructed by filtering out dominated solutions from \( WPF \), the weak Pareto front before $MOGDM$.
        \item Similarly, the refined Pareto front \( PFG \) is constructed from \( WPFG \), the weak Pareto front obtained after global refinement.
    \end{itemize}

\end{itemize}


\paragraph{Experimental validation}
Algorithm \ref{alg1} is evaluated on a diverse suite of benchmark test problems. Here, we considered the problems to be box-constrained problems. The numerical results demonstrate the proposed method’s robustness and computational efficiency, showing consistent performance across varied problem classes compared to established solvers.


\begin{table}[!htbp]
\centering
\scalebox{0.83}{
\tiny  
\setlength{\tabcolsep}{6pt}  
\renewcommand{\arraystretch}{1.2}  
\begin{tabular}{|c|l|c|c|c|c|l|c|c|c|}
\hline
\textbf{Sl. no.} & \textbf{Test problem} & \textbf{(m,n)} & \textbf{MOSQCQP} & \textbf{MOGDM} & 
\textbf{Sl. no.} & \textbf{Test problem} & \textbf{(m,n)} & \textbf{MOSQCQP} & \textbf{MOGDM} \\
\hline
1	&	AL1	&	(2,20)	&	100	&	100	&	23	&	LP1	&	(2,50)	&	48	&	48	\\
2	&	AL2	&	(2,50)	&	56	&	56	&	24	&	LR1	&	(2,50)	&	109	&	106	\\
3	&	CEC09\_1	&	(2,30)	&	196	&	196	&	25	&	GE5	&	(3,3)	&	200	&	200	\\
4	&	CEC09\_2	&	(2,30)	&	193	&	193	&	26	&	IKK1	&	(3,2)	&	200	&	200	\\
5	&	CEC09\_3	&	(2,30)	&	76	&	76	&	27	&	IM1	&	(2,2)	&	183	&	183	\\
6	&	CEC09\_7	&	(2,30)	&	146	&	146	&	28	&	Jin3	&	(2,2)	&	200	&	200	\\
7	&	CEC09\_8	&	(3,30)	&	141	&	141	&	29	&	Jin4\_a	&	(2,2)	&	168	&	175	\\
8	&	Deb513	&	(2,2)	&	17	&	177	&	30	&	KW2	&	(2,2)	&	50	&	51	\\
9	&	Deb521a\_a	&	(2,2)	&	200	&	200	&	31	&	lovison3	&	(2,2)	&	160	&	160	\\
10	&	Deb521b	&	(2,2)	&	200	&	200	&	32	&	lovison4	&	(2,2)	&	183	&	183	\\
11	&	DTLZ1	&	(3,7)	&	147	&	200	&	33	&	lovison5	&	(3,3)	&	89	&	104	\\
12	&	DTLZ1n2	&	(2,2)	&	13	&	200	&	34	&	lovison6	&	(3,3)	&	36	&	63	\\
13	&	DTLZ2	&	(3,12)	&	55	&	55	&	35	&	MOP2	&	(2,4)	&	14	&	170	\\
14	&	DTLZ2n2	&	(2,2)	&	176	&	177	&	36	&	MOP3	&	(2,2)	&	88	&	96	\\
15	&	DTLZ3	&	(3,12)	&	89	&	95	&	37	&	MOP5	&	(3,2)	&	134	&	158	\\
16	&	DTLZ3n2	&	(2,2)	&	16	&	200	&	38	&	MOP6	&	(2,4)	&	17	&	182	\\
17	&	DTLZ5	&	(3,12)	&	105	&	105	&	39	&	Shekel	&	(2,2)	&	182	&	183	\\
18	&	DTLZ5n2	&	(2,2)	&	198	&	198	&	40	&	slcdt1	&	(2,2)	&	200	&	200	\\
19	&	DTLZ6	&	(3,22)	&	164	&	174	&	41	&	VFM1	&	(3,2)	&	178	&	185	\\
20	&	DTLZ6n2	&	(2,2)	&	86	&	199	&	42	&	ZDT1	&	(2,30)	&	173	&	173	\\
21	&	EP2	&	(2,2)	&	200	&	200	&	43	&	ZDT2	&	(2,30)	&	200	&	200	\\
22	&	EX005	&	(2,2)	&	168	&	190	&	44	&	ZDT3	&	(2,30)	&	139	&	159	\\\hline
\end{tabular}
}
\caption{Comparison of (\textit{MOSQCQP}) and (\textit{MOGDM}) for various test problems.}
\label{table_1}
\end{table}
Table~\ref{table_1} presents a detailed comparison between the multi-objective global descent method $(\mathrm{MOGDM})$ and the established multi-objective sequential quadratic constrained quadratic programming ($\mathrm{MOSQCQP}$~\cite{ansary2021sqcqp}) approach across a diverse set of 44 benchmark test problems, encompassing a broad range of dimensional settings in both objectives~(\(m\)) and variables~(\(n\)).

The results show that MOGDM performs robustly across these test problems. In 23 instances, both MOGDM and MOSQCQP yield identical numbers of nondominated solutions, indicating comparable effectiveness on many standard problem types. In 20 problems, MOGDM identifies more nondominated points, particularly in settings with nonconvex structures, multiple local Pareto fronts, or irregular Pareto front geometries. Only one problem slightly favors MOSQCQP, with a negligible difference in outcome.

MOGDM displays especially strong performance on difficult instances such as \texttt{Deb513} (17 vs.\ 177), \texttt{DTLZ1n2} (13 vs.\ 200), \texttt{DTLZ3n2} (16 vs.\ 200), and \texttt{MOP2} (14 vs.\ 170), recovering significantly larger portions of the Pareto front. These examples highlight its capacity to navigate complex landscapes and avoid premature convergence.

Moreover, MOGDM achieves the maximum of 200 nondominated points in 13 problems, including \texttt{Deb521a\_a}, \texttt{Deb521b}, \texttt{DTLZ1n2}, \texttt{DTLZ3n2}, \texttt{GE5}, \texttt{IKK1}, \texttt{Jin3},\linebreak
\texttt{slcdt1}, \texttt{EP2}, \texttt{DTLZ5n2}, \texttt{ZDT2}, \texttt{ZDT3}, and \texttt{MOP6}. In these instances, MOGDM either matches or exceeds the performance of MOSQCQP while offering broader coverage of the Pareto front.

These findings suggest that MOGDM offers a reliable and adaptable framework for multi-objective optimization. It performs consistently well on relatively simple problems and shows clear advantages on more challenging ones, making it a strong candidate for applications requiring high-quality trade-off solutions.
\paragraph{Performance evaluation}
In this part, Algorithm~\ref{alg1} (MOGDM) is benchmarked against the MOSQCQP method \cite{ansary2021sqcqp} and the evolutionary algorithm NSGA-II \cite{deb2002fast} using performance profiles constructed from different quality indicators. The comparison is conducted across the benchmark problems summarized in Table~\ref{table_1}. These profiles are generated based on three widely used metrics in multi-objective optimization: the hypervolume metric, the $\Delta$-spread metric, and the number of function evaluations. For a detailed discussion on performance profiles and metric-based benchmarking, readers may refer to \cite{ansary2020sequential,ansary2021sqcqp, fliege2016sqp}.
\begin{figure}[H]
    \centering
    \begin{subfigure}[b]{0.45\textwidth}
        \centering
        \includegraphics[width=\textwidth,height=2.5cm]{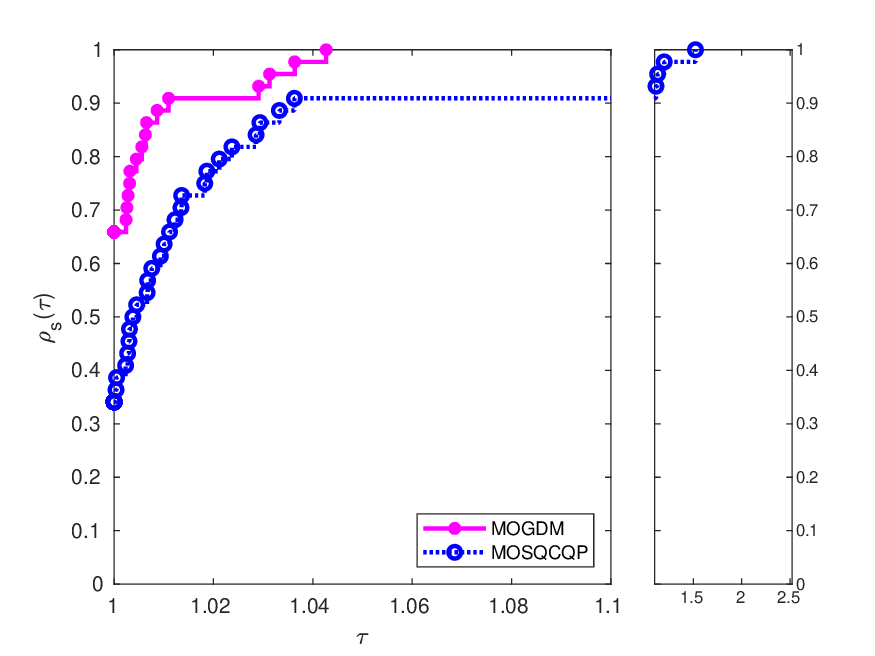}
        \caption{\raggedright Performance profile between MOGDM and MOSQCQP}
        \label{fig_hv_mogdm_mosqcqp}
    \end{subfigure}
    \hfill
    \begin{subfigure}[b]{0.45\textwidth}
        \centering
        \includegraphics[width=\textwidth,height=2.5cm]{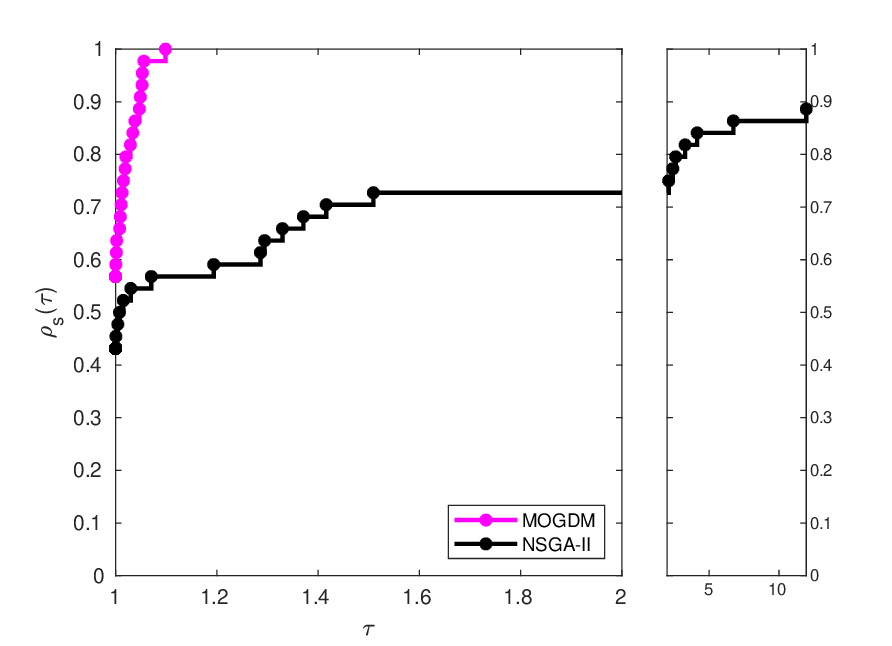}
        \caption{\raggedright Performance profile between MOGDM and NSGA-II}
        \label{fig_hv_mogdm_nsga2}
    \end{subfigure}    
    \caption{Performance profiles using hypervolume metric}
    \label{fig_hv}
\end{figure}

\begin{figure}[H]
    \centering
    \begin{subfigure}[b]{0.45\textwidth}
        \centering
        \includegraphics[width=\textwidth,height=2.5cm]{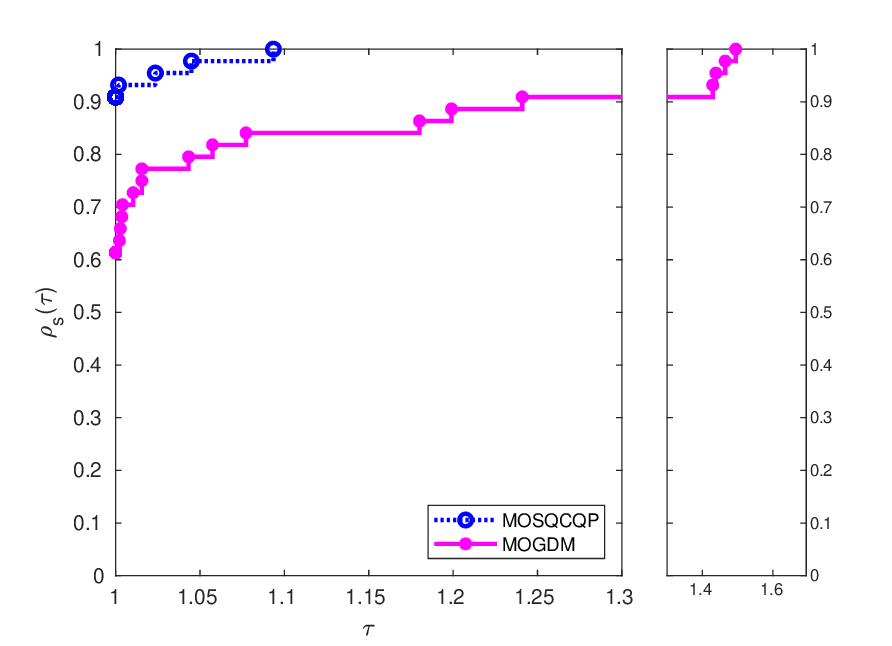}
        \caption{\raggedright Performance profile between MOGDM and MOSQCQP}
        \label{fig_delta_mogdm_mosqcp}
    \end{subfigure}
    \hfill
    \begin{subfigure}[b]{0.45\textwidth}
        \centering
        \includegraphics[width=\textwidth,height=2.5cm]{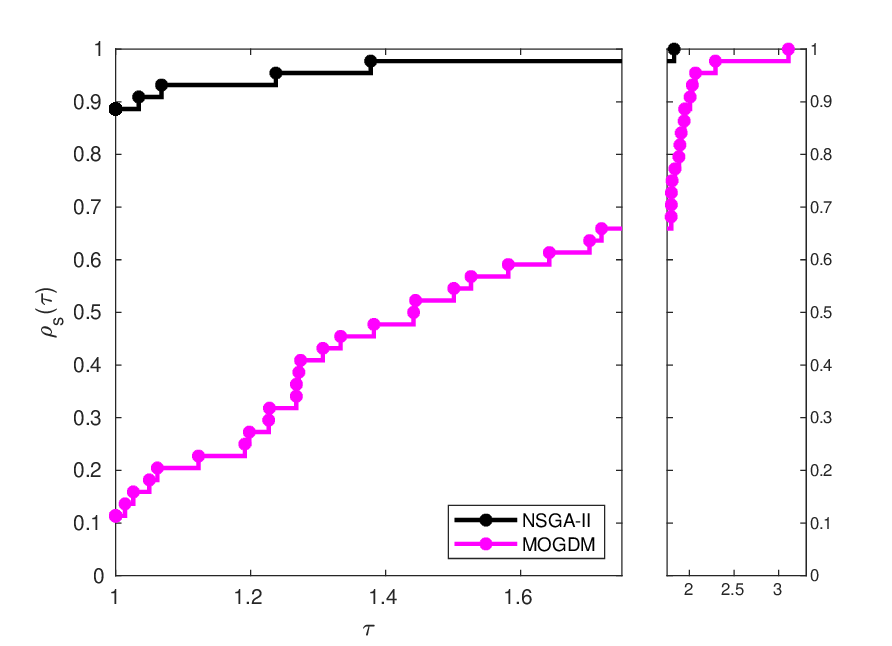}
        \caption{\raggedright Performance profile between MOGDM and NSGA-II}
        \label{fig_delta_mogdm_nsga2}
    \end{subfigure}    
    \caption{Performance profiles using $\Delta$ metric}
    \label{fig_delta}
\end{figure}

\begin{figure}[H]
    \centering
    \begin{subfigure}[b]{0.45\textwidth}
        \centering
        \includegraphics[width=\textwidth,height=2.5cm]{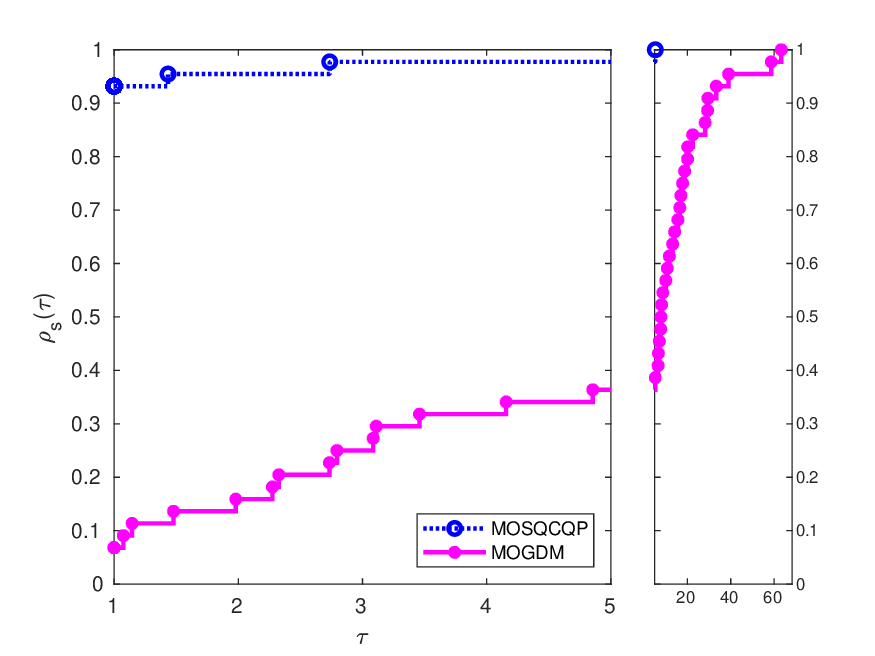}
        \caption{\raggedright Performance profile between MOGDM and MOSQCQP}
        \label{fig_feval_mogdm_mosqcqp}
    \end{subfigure}
    \hfill
    \begin{subfigure}[b]{0.45\textwidth}
        \centering
        \includegraphics[width=\textwidth,height=2.5cm]{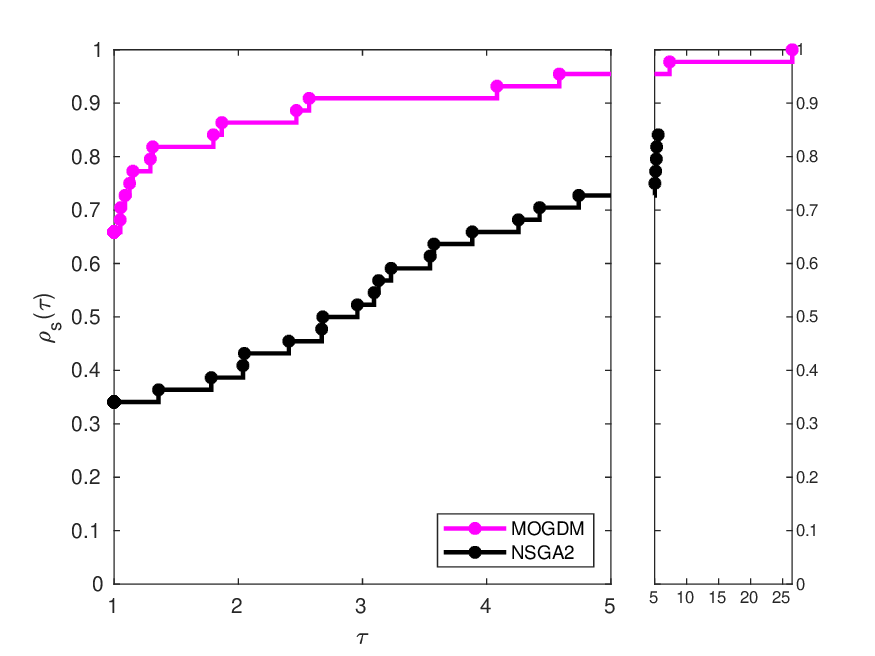}
        \caption{\raggedright Performance profile between MOGDM and NSGA-II}
        \label{fig_feval_mogdm_nsga2}
    \end{subfigure}    
    \caption{Performance profiles using number of function evaluations}
    \label{fig_feval}
\end{figure}
\par Figure~\ref{fig_hv} shows that MOGDM performs consistently better than both MOSQCQP and NSGA-II when evaluated using the hypervolume metric. Its profile dominates across nearly all thresholds, indicating that MOGDM tends to produce solution sets that cover larger portions of the Pareto front. This reflects stronger convergence and diversity properties in terms of objective-space coverage.
\par In Figure~\ref{fig_delta}, which reports performance using the $\Delta$-spread metric, MOSQCQP initially has an advantage over MOGDM, especially at strict thresholds. This suggests that MOSQCQP produces more uniformly spaced solutions in several test problems. However, as the tolerance level increases, the profiles of both methods converge, and MOGDM begins to catch up, indicating competitive average spacing quality.
\par Figure~\ref{fig_feval} presents a comparison based on the number of function evaluations. Since MOSQCQP forms a core part of MOGDM’s Phase 1 (local search), it is expected that MOGDM incurs a higher evaluation count than MOSQCQP. However, MOGDM still clearly outperforms NSGA-II, achieving better solution quality with fewer evaluations. This demonstrates that despite its hybrid nature, MOGDM remains computationally efficient and less expensive than evolutionary approaches like NSGA-II.

Overall, the performance profile analysis shows that MOGDM delivers strong hypervolume performance and efficient convergence while remaining competitive in solution spread. These characteristics make it a robust and well-balanced choice for solving multi-objective optimization problems.

\section{Conclusions and further developments} In this paper, a global descent method has been developed for unconstrained multi-objective optimization problems. The proposed method generated global efficient solutions for non-convex problems with the help of multi-objective global descent functions. In addition to this, the proposed method is free from any kind of a priori chosen parameter or ordering information of the objective function. Advantages of this method over the gradient descent method has been justified with various numerical examples. However, the proposed method is restricted to smooth unconstrained multi-objective optimization problems only. In the near future, we want to extend the ideas of this method to constrained problems as well as nonsmooth problems.

\appendix
\section{Details of test problems}\label{Test_Problems}
Several test problems in Table~\ref{table_1} are adapted from single-objective functions described in the Appendix of~\cite{duanli2010global}, and are extended in this work to a multi-objective optimization setting.
\begin{enumerate}
\item AL1 ($m=2$, $n=20$)
\begin{align*}
\min_{z \in \mathbb{R}^n} \quad & \left( \obj{1}(z), \obj{2}(z) \right) \\
\text{s.t.} \quad & -0.5 \leq z_i \leq 1.5, \quad \forall i \in \Idx{n} \\
\text{where} \quad 
& \obj{1}(z) = c_1 + e - c_1 \exp\left( -c_2 \sqrt{ \frac{1}{n} \sum_{i=1}^n z_i^2 } \right) - \exp\left\{ \frac{1}{n} \sum_{i=1}^n \cos(c_3 z_i) \right\}, \\
& \obj{2}(z) = \frac{\pi}{n} \Bigg[
k \sin^2(\pi z_1) 
+ \sum_{i=1}^{n-1} (z_i - a)^2 \left\{1 + k \sin^2(\pi z_{i+1}) \right\} \\
& \quad \quad \quad + (z_n - a)^2 \Bigg], \\
& \text{with parameters } 
n = 20,\; c_1 = 20,\; c_2 = 0.2,\; c_3 = 2\pi,\; k = 10,\; a = 1.
\end{align*}

\item AL2 ($m=2$, $n=50$)
\begin{align*}
\min_{z \in \mathbb{R}^n} \quad & \left(  \obj{1}(z),\;  \obj{2}(z) \right) \\
\text{s.t.} \quad & -0.5 \leq z_i \leq 1.5, \quad \forall i \in \Idx{n} \\
\text{where} \quad 
& \obj{1}(z) = c_1 + \mathrm{e} - c_1 \exp\left( -c_2 \sqrt{ \frac{1}{n} \sum_{i=1}^n z_i^2 } \right) 
               - \exp \Bigg\{ \frac{1}{n} \sum_{i=1}^n \cos(c_3 z_i) \Bigg\}, \\
& \obj{2}(z) = k_1 \Bigg[ \sin^2(\pi \ell_0 z_1) 
            + \sum_{i=1}^{n-1} (z_i - a)^2 \{1 + k_0 \sin^2(\pi \ell_1 z_{i+1}) \} \\
& \quad\quad\quad\quad + (z_n - a)^2 \{1 + k_0 \sin^2(2\pi \ell_1 z_n) \} \Bigg], \\
& \text{with parameters: } n = 50,\; \ell_0 = 3,\; \ell_1 = 2,\; c_1 = 20,\; c_2 = 0.2,\;\\
& \hspace{2.8cm} c_3 = 2\pi,\; k_0 = 1,\; k_1 = 0.1,\; a = 1.
\end{align*}

\item LP1 ($m=2$, $n=50$)
\begin{align*}
\min_{z \in \mathbb{R}^n} \quad & \left( \obj{1}(z),\;  \obj{2}(z) \right) \\
\text{s.t.} \quad & -3 \leq z_i \leq 2, \quad \forall i \in \Idx{n} \\
\text{where} \quad 
& \obj{1}(z) = \frac{\pi}{n} \Bigg[
k \sin^2(\pi z_1) 
+ \sum_{i=1}^{n-1}  (z_i - a)^2 \{1 + k \sin^2(\pi z_{i+1}) \}  \\
& \quad \quad \quad + (z_n - a)^2
\Bigg], \\
& \obj{2}(z) = \frac{1}{2} \sum_{i=1}^{n} \left(z_i^4 - 16z_i^2 + 5z_i\right), \\
& \text{with parameters: } n = 50,\;  k= 10,\; a = 1.
\end{align*}

\item LR1 ($m=2$, $n=50$)
\begin{align*}
\min_{z \in \mathbb{R}^n} \quad & \left( \obj{1}(z),\; \obj{2}(z) \right) \\
\text{s.t.} \quad & -2 \leq z_i \leq 2, \quad \forall i \in \Idx{n} \\
\text{where} \quad 
& \obj{1}(z) =\frac{\pi}{n} \Bigg[
k \sin^2(\pi z_1) 
+ \sum_{i=1}^{n-1}  (z_i - a)^2 \{1 + k \sin^2(\pi z_{i+1}) \}  \\
& \quad \quad \quad + (z_n - a)^2
\Bigg], \\
& \obj{2}(z) = A n + \sum_{i=1}^{n} \left( z_i^2 - A \cos(\omega z_i) \right), \\
& \text{with parameters: } n = 50,\; k = 10,\; a = 1,\; A = 10,\; \omega = 2\pi.
\end{align*}


\item 
For detailed formulations and additional information on the remaining test problems listed in Table~\ref{table_1}, the reader is referred to the works~\cite{Evtushenko2013-ii, fliege2016sqp, fonseca1995overview,schutze2008convergence,Zilinskas2014-po}.
\end{enumerate}
$ $\\
\section*{Declarations}
\begin{itemize}
\item {\it Funding:} This article is partially funded by {\it Prime Minister's Research Fellows (PMRF) scheme}. Author Bikram Adhikary acknowledges funding from the PMRF scheme (PMRF ID. 2202747), India.
\item Competing interests:  The authors declare that they have no competing interests.
\item Ethics approval and consent to participate: Not applicable.
\item Consent for publication: Not applicable.
\item Data availability: In this paper, we have not used any associated data. 
\item Materials availability: The code used for simulations is available upon reasonable request from the corresponding author.
\item Code availability: Code can be made available upon reasonable request.
\item Author contribution: All authors have contributed equally in theoretical developments as well as numerical calculations.
\end{itemize}
\bibliographystyle{sn-mathphys-num}

\end{document}